\newtheorem{lemma}{Lemma}
\newtheorem{theorem}{Theorem}
\newtheorem{corollary}{Corollary}
\newtheorem{definition}{Definition}
\newtheorem*{lemma*}{Lemma}
\newtheorem*{theorem*}{Theorem}
\newtheorem*{corollary*}{Corollary}
\newtheorem*{remark*}{Remark}
\newtheorem*{definition*}{Definition}
\newcommand{\sag}{\textsc{Sag}}
\newcommand{\sgd}{\textsc{Sgd}\xspace}
\newcommand{\saga}{\textsc{Saga}\xspace}
\newcommand{\regsaga}{\textsc{Reg-Saga}}
\newcommand{\gd}{\textsc{GradDescent}\xspace}
\newcommand{\sml}[1]{{\small #1}}
\newcommand{\fromto}[3]{\sml{$#1{\;\le\;}#2{\;\le\;}#3$}}
\newcommand{\reals}{\mathbb{R}}
\newcommand{\nlsum}{\sum\nolimits}
\newcommand{\E}{\mathbb{E}}
\newcommand{\Fc}{\mathcal{F}}
\renewcommand{\cite}[1]{\citep{#1}}
\begin{document}

\title{Fast Incremental Method for Nonconvex Optimization}

\author{\\
Sashank J. Reddi\\
\texttt{sjakkamr@cs.cmu.edu}\\
Carnegie Mellon University \\
\and \\
Suvrit Sra\\
\texttt{suvrit@mit.edu} \\
Massachusetts Institute of Technology \\
\and \\
Barnab\'{a}s P\'{o}cz\'os\\
\texttt{bapoczos@cs.cmu.edu} \\
Carnegie Mellon University \\
\and \\
Alex Smola \\
\texttt{alex@smola.org} \\ 
Carnegie Mellon University \\
}

% If your paper is accepted and the title of your paper is very long,
% the style will print as headings an error message. Use the following
% command to supply a shorter title of your paper so that it can be
% used as headings.
%
%\runningtitle{I use this title instead because the last one was very long}

% If your paper is accepted and the number of authors is large, the
% style will print as headings an error message. Use the following
% command to supply a shorter version of the authors names so that
% they can be used as headings (for example, use only the surnames)
%
%\runningauthor{Surname 1, Surname 2, Surname 3, ...., Surname n}
%
%\twocolumn[
%
%\aistatstitle{On the High Dimensional Power  of a General Nonparametric\\ Two Sample Test under a Mean-shift Alternative}
%
%\aistatsauthor{ Anonymous Author 1 \And Anonymous Author 2 \And Anonymous Author 3 }
%
%\aistatsaddress{ Unknown Institution 1 \And Unknown Institution 2 \And Unknown Institution 3 } ]
\date{}
\maketitle

\begin{abstract}
  We analyze a fast incremental aggregated gradient method for optimizing nonconvex problems of the form $\min_x \sum_i f_i(x)$. Specifically, we analyze the \saga algorithm within an Incremental First-order Oracle framework, and show that it converges to a stationary point provably faster than both gradient descent and stochastic gradient descent. We also discuss a Polyak's special class of nonconvex problems for which \saga converges at a linear rate to the global optimum. Finally, we analyze the practically valuable regularized and minibatch variants of \saga. To our knowledge, this paper presents the first analysis of fast convergence for an incremental aggregated gradient method for nonconvex problems.

\end{abstract}

%%%%%%%%%%%%%%%%%%%%%%%%%%%%%%%%%%%%%%%%%%%%%%%%%%%%%%%%%%%%%%%%%%%%%%%%%%%%%%%%
\section{Introduction}
We study the \emph{finite-sum} optimization problem
\begin{equation}
  \label{eq:1}
  \min_{x\in \reals^d}\ f(x) := \frac{1}{n}\sum_{i=1}^n f_i(x),
\end{equation}
where each $f_i$ ($i \in \{1,\dots, n\} \triangleq [n]$) can be nonconvex; our only assumption is that the gradients of $f_i$ exist and are Lipschitz continuous. We denote the class of such instances of~\eqref{eq:1} by $\Fc_n$.

Problems of the form~\eqref{eq:1} are of central importance in machine learning where they occur as instances of empirical risk minimization; well-known examples include logistic regression (convex)~\cite{logreg} and deep neural networks (nonconvex)~\cite{dnn}. Consequently, such problems have been intensively studied. A basic approach for solving~\eqref{eq:1} is gradient descent ($\gd$), described by the following update rule:
\begin{align}
\label{eq:gd}
x^{t+1} = x^t - \eta_t \nabla f(x^{t}), \text{ where } \eta_t > 0.
\end{align}
However, iteration~\eqref{eq:gd} is prohibitively expensive in large-scale settings where $n$ is very large. For such settings, stochastic and incremental methods are  typical~\cite{bertsekas11.survey,Defazio14}. These methods use cheap \emph{noisy} estimates of the gradient at each iteration of~\eqref{eq:gd} instead of $\nabla f(x^t)$. A particularly popular approach, stochastic gradient descent (\sgd) uses $\nabla f_{i_t}$, where $i_t$ in chosen uniformly randomly from $\{1, \dots, n\}$. While the cost of each iteration is now greatly reduced, it is not without any drawbacks. Due to the \emph{noise} (also known as variance in stochastic methods) in the gradients, one has to typically use decreasing step-sizes $\eta_t$ to ensure convergence, and consequently, the convergence rate gets adversely affected.

Therefore, it is of great interest to overcome the slowdown of \sgd without giving up its scalability. Towards this end, for convex instances of~\eqref{eq:1}, remarkable progress has been made recently. The key realization is that if we make multiple passes through the data, we can store information that allows us to reduce variance of the stochastic gradients. As a result, we can use constant stepsizes (rather than diminishing scalars) and obtain convergence faster than \sgd, both in theory and practice~\cite{Johnson13,Schmidt13,Defazio14}.

Nonconvex instances of~\eqref{eq:1} are also known to enjoy similar speedups~\cite{Johnson13}, but existing analysis does not explain this success as it relies heavily on convexity to control variance. Since \sgd dominates large-scale nonconvex optimization (including neural network training), it is of great value to develop and analyze faster nonconvex stochastic methods.

With the above motivation, we analyze \saga \cite{Defazio14}, an incremental aggregated gradient algorithm that extends the seminal \sag{} method of~\cite{Schmidt13}, and has been shown to work well for convex finite-sum problems~\cite{Defazio14}. Specifically, we analyze \saga for the class $\Fc_n$ using the \emph{incremental first-order oracle} (IFO) framework~\cite{agarwal2014}. For $f \in \mathcal{F}_n$, an IFO is a subroutine that takes an index $i \in [n]$ and a point $x \in \mathbb{R}^d$, and returns the pair $(f_i(x),\nabla f_i(x))$. %IFO was introduced in \cite{agarwal2014} to study lower bounds for finite-sum problem. %IFO based complexity analysis helps  study lower and upper bounds for finite-sum problems.

To our knowledge, this paper presents the first analysis of fast convergence for an incremental aggregated gradient method for \emph{nonconvex} problems. The attained rates improve over both \sgd and \gd, a benefit that is also corroborated by experiments. Before presenting our new analysis, let us briefly recall some items of related work.

%Asynchronous VR frameworks are developed in~\cite{Reddi2015,mania2015}. 
\subsection{Related work}
A concise survey of incremental gradient methods is~\cite{bertsekas11.survey}. An accessible analysis of stochastic convex optimization ($\min \E_z[F(x,z)]$) is~\cite{nemirov09}. Classically, \sgd stems from the seminal work \cite{RobMon51}, and has since witnessed many developments \cite{kushner2012}, including parallel and distributed variants \cite{BerTsi89,agDuc11,Recht11}, though non-asymptotic convergence analysis is limited to convex setups. Faster rates for convex problems in $\mathcal{F}_n$ are attained by variance reduced stochastic methods, e.g., \cite{Defazio14,Johnson13,Schmidt13,sdca,Reddi2015}. Linear convergence of stochastic dual coordinate ascent when $f_i$ ($i\in[n]$) may be nonconvex but $f$ is strongly convex is studied in \cite{Shwartz15}. Lower bounds for convex finite-sum problems are studied in \cite{agarwal2014}.

For nonconvex nonsmooth problems the first incremental proximal-splitting methods is in \cite{Sra2012}, though only asymptotic convergence is studied. Hong \cite{hong2014} studies convergence of a distributed nonconvex incremental ADMM algorithm. The first work to present non-asymptotic convergence rates for \sgd is \cite{Ghadimi13}; this work presents an $O(1/\epsilon^2)$ iteration bound for \sgd to satisfy approximate stationarity $\|\nabla f(x)\|^2 \le \epsilon$, and their convergence criterion is motivated by the gradient descent analysis of Nesterov~\cite{nesterov03}. The first analysis for nonconvex variance reduced stochastic gradient is due to~\cite{shamir2014stochastic}, who apply it to the specific problem of principal component analysis (PCA).

\section{Preliminaries}
In this section, we further explain our assumptions and goals. We say $f$ is $L$-\emph{smooth} if there is a constant $L$ such that
\begin{equation*}
  \|\nabla f(x)-\nabla f(y)\| \le L\|x-y\|,\quad\forall\ x, y \in \reals^d.
\end{equation*}
%A function $f$ is called $\lambda$-\emph{strongly convex} if there is $\lambda\ge 0$ such that
%$$
%f(x) \geq f(y) + \langle \nabla f(y), x - y \rangle + \tfrac{\lambda}{2} \|x - y\|^2\quad\forall x, y \in \mathbb{R}^d.
%$$
%The quantity $\kappa := L/\lambda$ is called the \emph{condition number} of $f$, whenever $f$ is $L$-smooth and $\lambda$-strongly convex. We say $f$ is non-strongly convex when $f$ is $0$-strongly convex. 
Throughout, we assume that all $f_i$ in~\eqref{eq:1} are $L$-smooth, i.e., $\|\nabla f_i(x)-\nabla f_i(y)\| \le L\|x-y\|$ for all $i \in [n]$. Such an assumption is common in the analysis of first-order methods. For ease of exposition, we assume the Lipschitz constant $L$ to be independent of $n$. For our analysis, we also discuss the class of $\tau$-\emph{gradient dominated} \cite{Polyak1963,nesterov2006} functions, namely functions for which
\begin{equation}
  \label{eq:2}
  f(x) - f(x^*) \leq \tau \|\nabla f(x)\|^2,
\end{equation}
where $x^*$ is a global minimizer of $f$. This class of functions was originally introduced by Polyak in \cite{Polyak1963}. Observe that such functions need not be convex. Also notice that gradient dominance \eqref{eq:2} is a restriction on the overall function $f$, but not on the individual functions $f_i$ ($i\in [n]$).
% it is also easy to show that a $\lambda$-strongly convex function is $1/\lambda$-gradient dominated.

Following \cite{nesterov03,Ghadimi13} we use $\|\nabla f(x)\|^2 \le \epsilon$ to judge approximate stationarity of $x$. Contrast this with \sgd for convex $f$, where one uses $[f(x) - f(x^*)]$ or $\|x - x^*\|^2$ as criteria for convergence analysis. Such criteria cannot be used for nonconvex functions due to the intractability of the problem. 

While the quantities $\|\nabla f(x)\|^2$, $[f(x) - f(x^*)]$, or $\|x - x^*\|^2$ are not comparable in general, they are typically assumed to be of similar magnitude (see \cite{Ghadimi13}). We note that our analysis does \emph{not} assume $n$ to be a constant, so we report dependence on it in our results. Furthermore, while stating our complexity results, we assume that the initial point of the algorithms is constant, i.e., $f(x^0) - f(x^*)$ and $\|x^0 - x^*\|$ are constants. For our analysis, we will need the following definition.

\begin{definition}
\label{def:eps-accurate}
A point $x$ is called $\epsilon$-accurate if $\|\nabla f(x)\|^2 \leq \epsilon$. A stochastic iterative algorithm is said to achieve $\epsilon$-accuracy in $t$ iterations if $\mathbb{E}[\|\nabla f(x^t)\|^2] \leq \epsilon$, where the expectation is taken over its stochasticity.
\end{definition}

%\begin{algorithm}[h]\small
%   \caption{SGD}
%   \label{alg:sgd}
%\begin{algorithmic}
%   \STATE {\bfseries Input:} $x^0 \in \mathbb{R}^d$, Step-size sequence: $\{\eta_t > 0\}_{t=0}^{T-1}$
%   \FOR{$t=0$ {\bfseries to} $T-1$}
%   \STATE Uniformly randomly pick $i_t$ from $\{1, \dots, n\}$
%   \STATE 
%   \ENDFOR
%\end{algorithmic}
%\end{algorithm}

We start our discussion of algorithms by recalling \sgd, which performs the following update in its $t^{\text{th}}$ iteration:
\begin{align}
\label{eq:sgd}
x^{t+1} = x^{t} - \eta_t \nabla f_{i_t}(x),
\end{align}
where $i_t$ is a random index chosen from $[n]$, and the gradient $\nabla f_{i_t}(x^t)$ approximates the gradient of $f$ at $x^t$, $\nabla f(x^t)$. It can be seen that the update is unbiased, i.e., $\mathbb{E}[\nabla f_{i_t}(x^t)] = \nabla f(x^t)$ since the index $i_t$ is chosen uniformly at random. Though the \sgd update is unbiased, it suffers from variance due to the aforementioned stochasticity in the chosen index. To control the variance one has to decrease the step size $\eta_t$ in~\eqref{eq:sgd}, which in turn leads to slow convergence. The following is a well-known result on $\sgd$ in the context of nonconvex optimization \cite{Ghadimi13}.

\begin{theorem}
  \label{thm:sgd-oracle}
  Suppose $\|\nabla f_i\| \leq \sigma$ i.e., gradient of function $f_i$ is bounded for all $i \in [n]$, then the IFO complexity of $\sgd$ to obtain an $\epsilon$-accurate solution is $O(1/\epsilon^2)$.
\end{theorem}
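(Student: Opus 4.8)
The plan is to follow the classical analysis of nonconvex \sgd due to \cite{Ghadimi13}, whose engine is the descent inequality implied by $L$-smoothness. First I would observe that since every $f_i$ is $L$-smooth, so is $f$, and hence for all $x,y$ we have $f(y) \le f(x) + \ip{\nabla f(x)}{y-x} + \tfrac{L}{2}\|y-x\|^2$. Applying this with $x = x^t$ and $y = x^{t+1} = x^t - \eta_t \nabla f_{i_t}(x^t)$, then taking expectation over the random index $i_t$ conditioned on $x^t$, the unbiasedness $\E[\nabla f_{i_t}(x^t)] = \nabla f(x^t)$ collapses the cross term to $-\eta_t\|\nabla f(x^t)\|^2$, while the hypothesis $\|\nabla f_i\| \le \sigma$ bounds the quadratic term by $\tfrac{L\sigma^2}{2}\eta_t^2$. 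This produces the one-step bound
\begin{equation*}
\E[f(x^{t+1})] \le f(x^t) - \eta_t \|\nabla f(x^t)\|^2 + \tfrac{L\sigma^2}{2}\eta_t^2.
\end{equation*}

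Next I would specialize to a constant stepsize $\eta_t \equiv \eta$, rearrange to isolate $\eta\|\nabla f(x^t)\|^2$, sum over $t = 0,1,\dots,T-1$, take total expectations, and use $\E[f(x^T)] \ge f(x^*)$ to telescope. Dividing by $\eta T$ gives
\begin{equation*}
\frac{1}{T}\sum_{t=0}^{T-1} \E[\|\nabla f(x^t)\|^2] \le \frac{f(x^0) - f(x^*)}{\eta T} + \frac{L\sigma^2}{2}\eta .
\end{equation*}
Interpreting the left-hand side as $\E[\|\nabla f(x^a)\|^2]$ for an index $a$ drawn uniformly from $\{0,\dots,T-1\}$ (equivalently, a lower bound on $\min_t \E[\|\nabla f(x^t)\|^2]$) puts us in the setting of Definition~\ref{def:eps-accurate}. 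The two terms on the right display the usual bias--variance tradeoff: one decays in $\eta$, the other grows in $\eta$.

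Finally I would optimize over $\eta$. Choosing $\eta = c/\sqrt{T}$ for a suitable constant $c$ (depending only on $L$, $\sigma$, and the constant $f(x^0)-f(x^*)$) balances the two terms and makes the right-hand side $O(1/\sqrt{T})$. Thus to achieve $\E[\|\nabla f(x^a)\|^2] \le \epsilon$ it suffices to run $T = O(1/\epsilon^2)$ iterations; since each \sgd iteration issues exactly one IFO query, namely $(f_{i_t}(x^t),\nabla f_{i_t}(x^t))$, the IFO complexity is $O(1/\epsilon^2)$.

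The only delicate point I anticipate is the bookkeeping around the output rule and the horizon-dependent stepsize: one must fix $T$ in advance to set $\eta \propto 1/\sqrt{T}$, and one must report the gradient norm at a randomly chosen (or the best) iterate rather than the last one, because in the nonconvex regime there is no monotone decrease of $\|\nabla f\|^2$ along the trajectory. Everything else is routine manipulation of the smoothness inequality together with linearity of expectation.
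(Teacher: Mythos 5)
Your proof is correct and is exactly the standard Ghadimi--Lan argument that the paper itself invokes: the theorem is stated here without proof, as a known result cited from \cite{Ghadimi13}, and your derivation (descent lemma, unbiasedness, bounded second moment via $\|\nabla f_i\|\le\sigma$, telescoping with $\eta \propto 1/\sqrt{T}$, and a randomly chosen output iterate) reconstructs that reference's proof faithfully. Your closing caveats about fixing $T$ in advance and not using the last iterate are also the same points the paper raises in its later discussion of \sgd's drawbacks.
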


It is instructive to compare the above result with the convergence rate of $\gd$. The IFO complexity of $\gd$ is $O(n/\epsilon)$. Thus, while $\sgd$ eliminates the dependence on $n$, the convergence rate worsens to $O(1/\epsilon^2)$ from $O(1/\epsilon)$ in $\gd$. In the next section, we investigate an incremental method with faster convergence.

\section{Algorithm}
We describe below the \saga algorithm and prove its fast convergence for \emph{nonconvex} optimization. $\saga$ is a popular incremental method in machine learning and optimization communities. It is very effective in reducing the variance introduced due to stochasticity in $\sgd$.
Algorithm~\ref{alg:saga} presents pseudocode for \saga. Note that the update $v^t$ (Line 5) is unbiased, i.e., $\mathbb{E}[v^t] = \nabla f(x^t)$. This is due to the uniform random selection of index $i_t$. It can be seen in Algorithm~\ref{alg:saga} that $\saga$ maintains gradients at $\alpha_{i}$ for $i \in [n]$. This additional set is critical to reducing the variance of the update $v^t$. At each iteration of the algorithm, one of the $\alpha_i$ is updated to the current iterate. An implementation of $\saga$ requires storage and updating of gradients $\nabla f_i(\alpha_i)$; the storage cost of the algorithm is $nd$. While this leads to higher storage in comparison to $\sgd$, this cost is often reasonable for many applications. Furthermore, this cost can be reduced in the case of specific models; refer to \cite{Defazio14} for more details. 

For ease of exposition, we introduce the following quantity:
\begin{align}
\label{eq:Gamma-t}
\Gamma_{t} = \bigl(\eta - \tfrac{c_{t+1}\eta}{\beta} - \eta^2L - 2c_{t+1}\eta^2\bigr),
\end{align}
where the parameters $c_{t+1}$, $\beta$ and $\eta$ will be defined shortly. We start with the following set of key results that provide convergence rate of Algorithm~\ref{alg:saga}.

\begin{algorithm}[tb]\small
   \caption{SAGA$\left(x^0,T, \eta\right)$}
   \label{alg:saga}
\begin{algorithmic}[1]
   \STATE {\bfseries Input:} $x^0 \in \mathbb{R}^d$, $\alpha_{i}^0 = x^0$ for $i \in [n]$,  number of iterations $T$, step size $\eta > 0$
   \STATE $g^{0} = \frac{1}{n} \sum_{i=1}^n \nabla f_{i}(\alpha_i^{0})$
   \FOR{$t=0$ {\bfseries to} $T-1$}
   \STATE Uniformly randomly pick $i_t, j_t$ from $[n]$ 
   \STATE $v^t =  \nabla f_{i_t}(x^t) - \nabla f_{i_t}(\alpha_{i_t}^{t}) + g^{t}$
   \STATE $x^{t+1} = x^{t} - \eta v^t$
   \STATE $\alpha_{j_t}^{t+1} = x^t$ and $\alpha_{j}^{t+1} = \alpha_{j}^{t}$ for $j \neq j_t$
   \STATE $g^{t+1} = g^t - \frac{1}{n}(\nabla f_{j_t}(\alpha_{j_t}^t) - \nabla f_{j_t}(\alpha_{j_t}^{t+1}))$
   \ENDFOR
   \STATE {\bfseries Output:} Iterate $x_a$ chosen uniformly random from $\{x^t\}_{t=0}^{T-1}$.
\end{algorithmic}
\end{algorithm}

\begin{lemma}
\label{lem:nonconvex-saga}
For $c_t, c_{t+1}, \beta > 0$, suppose we have 
$$
c_{t} = c_{t+1}(1 - \tfrac{1}{n} +  \eta\beta + 2\eta^2L^2 ) +  \eta^2L^3.
$$ 
Also let $\eta$, $\beta$ and $c_{t+1}$ be chosen such that $\Gamma_{t} > 0$. Then, the iterates $\{x^t\}$ of Algorithm~\ref{alg:saga} satisfy the bound
\begin{align*}
\mathbb{E}[\|\nabla f(x^{t})\|^2] \leq \frac{R^{t} - R^{t+1}}{\Gamma_t},
\end{align*}
where $R^{t} := \mathbb{E}[f(x^{t}) +  (c_{t}/n) \sum_{i=1}^n \|x^{t} - \alpha_i^{t}\|^2]$.
\end{lemma}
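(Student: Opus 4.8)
The plan is to construct a Lyapunov (potential) function $R^t := \mathbb{E}[f(x^t) + (c_t/n)\sum_{i=1}^n \|x^t - \alpha_i^t\|^2]$ and show that one step of the algorithm decreases it by at least $\Gamma_t\,\mathbb{E}[\|\nabla f(x^t)\|^2]$; summing the telescoping inequality then gives the claim. First I would invoke $L$-smoothness of $f$ to write the descent-type bound $\mathbb{E}[f(x^{t+1})] \le \mathbb{E}[f(x^t)] - \eta\,\mathbb{E}[\ip{\nabla f(x^t)}{v^t}] + \tfrac{L\eta^2}{2}\mathbb{E}[\|v^t\|^2]$. Using unbiasedness $\mathbb{E}_{i_t}[v^t] = \nabla f(x^t)$, the inner-product term becomes $-\eta\,\mathbb{E}[\|\nabla f(x^t)\|^2]$, so the whole game reduces to controlling $\mathbb{E}[\|v^t\|^2]$, the variance of the \saga estimator. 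I would bound $\mathbb{E}[\|v^t\|^2] \le 2\mathbb{E}[\|\nabla f(x^t)\|^2] + 2\mathbb{E}[\|\nabla f_{i_t}(x^t) - \nabla f_{i_t}(\alpha_{i_t}^t)\|^2]$ (since $g^t = \mathbb{E}_{i_t}[\nabla f_{i_t}(\alpha_{i_t}^t)]$ and $\mathbb{E}\|X - \mathbb{E}X\|^2 \le \mathbb{E}\|X\|^2$ applied to $X = \nabla f_{i_t}(x^t) - \nabla f_{i_t}(\alpha_{i_t}^t)$), and then use $L$-smoothness of each $f_i$ to get $\mathbb{E}[\|\nabla f_{i_t}(x^t) - \nabla f_{i_t}(\alpha_{i_t}^t)\|^2] \le L^2\cdot\tfrac1n\sum_i \|x^t - \alpha_i^t\|^2$, tying the variance to the second term of the potential.

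Next I would handle the curvature term $\tfrac{c_{t+1}}{n}\sum_i \mathbb{E}\|x^{t+1} - \alpha_i^{t+1}\|^2$. Conditioning on the choice of $j_t$, with probability $1/n$ the index $i$ is refreshed so $\alpha_i^{t+1} = x^t$ and $\|x^{t+1} - \alpha_i^{t+1}\|^2 = \|x^{t+1} - x^t\|^2 = \eta^2\|v^t\|^2$; with probability $1 - 1/n$ it is unchanged, giving $\|x^{t+1} - \alpha_i^t\|^2$. For the latter I would expand $\|x^{t+1} - \alpha_i^t\|^2 = \|x^t - \alpha_i^t\|^2 - 2\eta\ip{v^t}{x^t - \alpha_i^t} + \eta^2\|v^t\|^2$ and apply Young's inequality $-2\ip{\nabla f(x^t)}{x^t-\alpha_i^t} \le \beta\|x^t-\alpha_i^t\|^2 + \tfrac1\beta\|\nabla f(x^t)\|^2$ after taking expectation over $i_t$, which is exactly where the free parameter $\beta$ enters. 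Collecting the $\|x^t - \alpha_i^t\|^2$ coefficients yields precisely the stated recursion $c_t = c_{t+1}(1 - \tfrac1n + \eta\beta + 2\eta^2 L^2) + \eta^2 L^3$, and collecting the $\mathbb{E}[\|\nabla f(x^t)\|^2]$ coefficients yields $-\Gamma_t$ with $\Gamma_t = \eta - \tfrac{c_{t+1}\eta}{\beta} - \eta^2 L - 2c_{t+1}\eta^2$; the $\|v^t\|^2$ terms are absorbed via the variance bound above. This gives $R^{t+1} \le R^t - \Gamma_t\,\mathbb{E}[\|\nabla f(x^t)\|^2]$, and rearranging (using $\Gamma_t > 0$) is the desired bound.

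The main obstacle is the bookkeeping in the second step: one must carefully combine the two sources of randomness ($i_t$ for the gradient step, $j_t$ for the table update) in the right order, track which terms multiply $\|v^t\|^2$ versus $\|x^t - \alpha_i^t\|^2$ versus $\|\nabla f(x^t)\|^2$, and verify that after substituting the variance bound for $\mathbb{E}\|v^t\|^2$ everything lines up so that the $\|x^t-\alpha_i^t\|^2$ coefficient collapses to exactly the claimed $c_t$ and the leftover $\|\nabla f(x^t)\|^2$ coefficient is exactly $\Gamma_t$. There is no conceptual difficulty — it is a single-step Lyapunov argument — but the algebra is delicate, and a sign or factor-of-two error anywhere propagates into the constants. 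I would also note that the recursion for $c_t$ is used here only as a hypothesis; its resolution (choosing $c_T = 0$ and solving backwards, with an appropriate constant step size $\eta$ so that $\min_t \Gamma_t$ is bounded below) is presumably carried out in the subsequent theorem.
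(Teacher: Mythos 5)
Your proposal is correct and follows essentially the same route as the paper's proof: the one-step Lyapunov descent $R^{t+1}\le R^t-\Gamma_t\,\mathbb{E}[\|\nabla f(x^t)\|^2]$, obtained from the smoothness/descent lemma with unbiasedness, the variance bound $\mathbb{E}\|v^t\|^2\le 2\mathbb{E}\|\nabla f(x^t)\|^2+\tfrac{2L^2}{n}\sum_i\mathbb{E}\|x^t-\alpha_i^t\|^2$ via $\mathbb{E}\|\zeta-\mathbb{E}\zeta\|^2\le\mathbb{E}\|\zeta\|^2$, and the conditioning on $j_t$ plus Young's inequality with parameter $\beta$ for the table term. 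The coefficient bookkeeping you describe matches the paper's exactly, yielding the stated recursion for $c_t$ and the coefficient $\Gamma_t$.
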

The proof of this lemma is given in Section~\ref{sec:proofs}. Using this lemma we prove the following result on the iterates of $\saga$.

\begin{theorem}
\label{thm:nonconvex-inter}
  Let $f \in \Fc_n$. Let $c_T = 0$, $\beta > 0$, and $c_{t} = c_{t+1}(1 - \tfrac{1}{n} + \eta\beta + 2\eta^2L^2) +  \eta^2L^3$ be such that $\Gamma_t > 0$ for \fromto{0}{t}{T-1}. Define the quantity $\gamma_n := \min_{0 \leq t \leq T-1} \Gamma_t$. Then  the output $x_a$ of Algorithm~\ref{alg:saga} satisfies the bound
  \begin{align*}
    \mathbb{E}[\|\nabla f(x_a)\|^2] \leq \frac{f(x^{0}) - f(x^*)}{T\gamma_n},
  \end{align*}
  where $x^*$ is an optimal solution to~\eqref{eq:1}.
\end{theorem}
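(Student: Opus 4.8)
The plan is to derive Theorem~\ref{thm:nonconvex-inter} directly from Lemma~\ref{lem:nonconvex-saga} by a telescoping-sum argument. First I would note that the hypotheses of the theorem are exactly those of the lemma at every index $t \in \{0,\dots,T-1\}$: the recursion $c_t = c_{t+1}(1 - \tfrac1n + \eta\beta + 2\eta^2L^2) + \eta^2L^3$ holds and $\Gamma_t > 0$. Hence Lemma~\ref{lem:nonconvex-saga} gives, for each such $t$,
$$\mathbb{E}[\|\nabla f(x^t)\|^2] \le \frac{R^t - R^{t+1}}{\Gamma_t}.$$
Since the left-hand side is nonnegative and $\Gamma_t > 0$, this already forces $R^t - R^{t+1} \ge 0$; therefore replacing $\Gamma_t$ by the smaller positive number $\gamma_n = \min_{0\le t\le T-1}\Gamma_t$ only weakens the inequality, yielding $\mathbb{E}[\|\nabla f(x^t)\|^2] \le (R^t - R^{t+1})/\gamma_n$.

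Next I would sum this bound over $t = 0,\dots,T-1$, so that the right-hand side telescopes to $(R^0 - R^T)/\gamma_n$. It then remains to control the two boundary terms. For $R^0$: by the initialization $\alpha_i^0 = x^0$ for all $i$, the term $(c_0/n)\sum_i \|x^0 - \alpha_i^0\|^2$ vanishes, so $R^0 = f(x^0)$. For $R^T$: the choice $c_T = 0$ kills the quadratic term, so $R^T = \mathbb{E}[f(x^T)] \ge f(x^*)$ because $x^*$ is a global minimizer of $f$. Combining these facts gives $\sum_{t=0}^{T-1}\mathbb{E}[\|\nabla f(x^t)\|^2] \le (f(x^0) - f(x^*))/\gamma_n$.

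Finally, I would translate this into a statement about the algorithm's actual output $x_a$, which is drawn uniformly from $\{x^t\}_{t=0}^{T-1}$ independently of the trajectory. Taking expectation over this additional randomness gives $\mathbb{E}[\|\nabla f(x_a)\|^2] = \tfrac1T\sum_{t=0}^{T-1}\mathbb{E}[\|\nabla f(x^t)\|^2]$, and dividing the previous display by $T$ yields the claimed bound. I do not expect a genuine obstacle here: once Lemma~\ref{lem:nonconvex-saga} is available, the argument is essentially bookkeeping, and the only point requiring a moment's care is justifying the substitution $\Gamma_t \mapsto \gamma_n$ inside the denominator, which is legitimate precisely because the lemma guarantees the telescoped increments $R^t - R^{t+1}$ are nonnegative. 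All the substantive work — the variance bound on $v^t$, the design of the Lyapunov function $R^t$, and the per-step descent estimate — is deferred to the proof of the lemma.
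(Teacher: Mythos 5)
Your proposal is correct and follows essentially the same route as the paper: invoke Lemma~\ref{lem:nonconvex-saga} at each step, telescope, use $c_T=0$ and $\alpha_i^0=x^0$ to identify the boundary terms $R^0=f(x^0)$ and $R^T=\mathbb{E}[f(x^T)]\ge f(x^*)$, and average over the uniform choice of $x_a$. The only cosmetic difference is that the paper writes the bound as $\gamma_n\,\mathbb{E}[\|\nabla f(x^t)\|^2]\le \Gamma_t\,\mathbb{E}[\|\nabla f(x^t)\|^2]\le R^t-R^{t+1}$, which sidesteps the (correct but unnecessary) observation you make about the nonnegativity of the telescoped increments.
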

\begin{proof}
  We apply telescoping sums to the result of Lemma~\ref{lem:nonconvex-saga} to obtain
\begin{align*}
   \gamma_n \nlsum_{t=0}^{T-1} \mathbb{E}[\|\nabla f(x^{t})\|^2] &\leq \nlsum_{t=0}^{T-1} \Gamma_t \mathbb{E}[\|\nabla f(x^{t})\|^2] \\
   &\leq R^{0} - R^{T}.
\end{align*}
The first inequality follows from the definition of $\gamma_n$. This inequality in turn implies the bound
\begin{align}
  \label{eq:descent-property}
  \nlsum_{t=0}^{T-1} \mathbb{E}[\|\nabla f(x^{t})\|^2] \leq \frac{\mathbb{E}[f(x^0) - f(x^{T})]}{\gamma_n},
\end{align}
where we used that $R^{T} = \mathbb{E}[f(x^{T})]$ (since $c_T = 0$), and that $R^{0} = \mathbb{E}[f(x^0)]$ (since $\alpha_i^0 = x^0$ for $i \in [n]$). Using inequality~\eqref{eq:descent-property}, the optimality of $x^*$, and the definition of $x_a$ in Algorithm~\ref{alg:saga}, we obtain the desired result.
\end{proof}

Note that the notation $\gamma_n$ involves $n$, since this quantity can depend on $n$. To obtain an explicit dependence on $n$, we have to use an appropriate choice of $\beta$ and $\eta$. This is made precise by the following main result of the paper.

\begin{theorem}
\label{thm:nonconvex-gen}
Suppose $f \in \Fc_n$. Let $\eta = 1/(3Ln^{2/3})$ and $\beta = L/n^{1/3}$. Then,  $\gamma_n \geq \frac{1}{12Ln^{2/3}}$ and we have the bound
\begin{align*}
\mathbb{E}[\|\nabla f(x_a)\|^2] &\leq \frac{12Ln^{2/3} [f(x^{0}) - f(x^*)]}{T},
\end{align*} 
where $x^*$ is an optimal solution to the problem in~\eqref{eq:1} and $x_a$ is the output of Algorithm~\ref{alg:saga}.
\end{theorem}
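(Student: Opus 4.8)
The plan is to feed the explicit parameter choices into Theorem~\ref{thm:nonconvex-inter}. That theorem already yields $\mathbb{E}[\|\nabla f(x_a)\|^2]\le (f(x^0)-f(x^*))/(T\gamma_n)$ with $\gamma_n=\min_{0\le t\le T-1}\Gamma_t$, so the entire task reduces to the scalar inequality $\gamma_n\ge \tfrac{1}{12Ln^{2/3}}$ for the stated $\eta=1/(3Ln^{2/3})$ and $\beta=L/n^{1/3}$ (and, as a byproduct, to checking $\Gamma_t>0$, the hypothesis needed to invoke Theorem~\ref{thm:nonconvex-inter}). The claimed bound on $\mathbb{E}[\|\nabla f(x_a)\|^2]$ is then immediate by substitution.

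The first step is a uniform-in-$t$ bound on the auxiliary sequence $c_t$. Writing $\theta:=1-\tfrac1n+\eta\beta+2\eta^2L^2$, the recursion $c_t=\theta\,c_{t+1}+\eta^2L^3$ with $c_T=0$ unrolls to $c_t=\eta^2L^3\sum_{k=0}^{T-1-t}\theta^k$. Substituting the parameter values gives $\eta\beta=\tfrac1{3n}$ and $2\eta^2L^2=\tfrac{2}{9n^{4/3}}$, hence $\theta=1-\tfrac{2}{3n}\bigl(1-\tfrac{1}{3n^{1/3}}\bigr)<1$; since $n\ge1$ forces $1-\tfrac1{3n^{1/3}}\ge\tfrac23$, we obtain $1-\theta\ge\tfrac{4}{9n}$. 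Therefore the geometric sum converges and is bounded, uniformly in $t$, by $c_t\le\tfrac{\eta^2L^3}{1-\theta}\le\tfrac{9n}{4}\,\eta^2L^3=\tfrac{L}{4n^{1/3}}$.

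The second step plugs $c_{t+1}\le \tfrac{L}{4n^{1/3}}$ into $\Gamma_t=\eta-\tfrac{c_{t+1}\eta}{\beta}-\eta^2L-2c_{t+1}\eta^2$ and bounds each subtracted term as a multiple of $\eta$. Using $\eta/\beta=\tfrac{1}{3L^2n^{1/3}}$ one finds $\tfrac{c_{t+1}\eta}{\beta}\le\tfrac{1}{12Ln^{2/3}}=\tfrac{\eta}{4}$; next $\eta^2L=\tfrac{\eta}{3n^{2/3}}\le\tfrac{\eta}{3}$; and $2c_{t+1}\eta^2\le \tfrac{\eta}{6n}\le\tfrac{\eta}{6}$. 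Adding up, $\Gamma_t\ge \eta\bigl(1-\tfrac14-\tfrac13-\tfrac16\bigr)=\tfrac{\eta}{4}=\tfrac{1}{12Ln^{2/3}}$, uniformly over $0\le t\le T-1$; in particular $\Gamma_t>0$, so Theorem~\ref{thm:nonconvex-inter} applies and $\gamma_n\ge\tfrac1{12Ln^{2/3}}$.

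Everything here is routine arithmetic once the right object is identified, namely the uniform bound on $c_t$ coming from summing the geometric series; this is the only step that needs care, and I expect it to be the main (modest) obstacle. The structural point is that the exponents $2/3$ and $1/3$ are chosen precisely so $\eta\beta$ is of the same order as the native $-1/n$ term in $\theta$, forcing $1-\theta=\Theta(1/n)$ and hence $c_t=\Theta(\eta^2L^3n)=\Theta(L/n^{1/3})$; this in turn makes $c_{t+1}\eta/\beta=\Theta(\eta)$, so $\Gamma_t$ comes out a constant fraction of $\eta$ rather than something smaller, which is what gives the $Ln^{2/3}$ scaling in the final rate. Keeping the per-term bounds above tight (e.g. exploiting that $1-\tfrac1{3n^{1/3}}\ge\tfrac23$ rather than a cruder estimate) is exactly what produces the explicit constant $12$.
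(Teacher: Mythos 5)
Your proposal is correct and follows essentially the same route as the paper's proof: unroll the recursion for $c_t$ with $c_T=0$, use $\eta\beta=\tfrac{1}{3n}$ and $2\eta^2L^2=\tfrac{2}{9n^{4/3}}$ to get $1-\theta\ge\tfrac{4}{9n}$ (the paper writes the same quantity as $\theta\ge 4/(9n)$ with the complementary convention), deduce $c_t\le L/(4n^{1/3})$, and then bound the three subtracted terms in $\Gamma_t$ by $\eta/4$, $\eta/3$, $\eta/6$ respectively before invoking Theorem~\ref{thm:nonconvex-inter}. The only (harmless) difference is that you explicitly note the $\Gamma_t>0$ hypothesis, which the paper leaves implicit.
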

\begin{proof}
With the values of $\eta$ and $\beta$, let us first establish an upper bound on $c_t$. Let $\theta$ denote $\tfrac{1}{n} - \eta\beta - 2\eta^2L^2$. Observe that $\theta < 1$ and $\theta \geq 4/(9n)$. This is due to the specific values of $\eta$ and $\beta$ stated in the theorem. Also, we have $c_{t} = c_{t+1}(1 - \theta) +  \eta^2L^3$. Using this relationship, it is easy to see that $c_{t} = \eta^2L^3 \tfrac{1 - (1 - \theta)^{T-t}}{\theta}$. Therefore, we obtain the bound
\begin{align}
\label{eq:c-t-upperbound}
c_{t}  = \eta^2L^3 \tfrac{1 - (1 - \theta)^{T-t}}{\theta} \leq \frac{\eta^2L^3}{\theta} \leq \frac{L}{4n^{1/3}},
\end{align}
for all $0 \leq t \leq T$, where the inequality follows from the definition of $\eta$ and the fact that $\theta \geq 4/(9n)$. Using the above upper bound on $c_t$ we can conclude that
\begin{align*}
\gamma_n &= \min_t \bigl(\eta - \tfrac{c_{t+1}\eta}{\beta} - \eta^2L - 2c_{t+1}\eta^2\bigr) \geq \frac{1}{12Ln^{2/3}},
\end{align*}
upon using the following inequalities: (i) $c_{t+1}\eta/\beta \leq \eta/4$, (ii) $\eta^2L \leq \eta/3$ and (iii) $2c_{t+1}\eta^2 \leq \eta/6$, which hold due to the upper bound on $c_t$ in~\eqref{eq:c-t-upperbound}. Substituting this bound on $\gamma_n$ in Theorem~\ref{thm:nonconvex-inter}, we obtain the desired result.
\end{proof}

A more general result with step size $\eta < 1/(3Ln^{2/3})$ can be proved, but it will only lead to a theoretically suboptimal convergence result. Recall that each iteration of Algorithm~\ref{alg:saga} requires $O(1)$ IFO calls. Using this fact, we can rewrite Theorem~\ref{thm:nonconvex-gen} in terms of its IFO complexity as follows.
\begin{corollary}
\label{cor:saga-nonconvex-oracle}
If $f \in \Fc_n$, then the IFO complexity of Algorithm~\ref{alg:saga} (with parameters from Theorem~\ref{thm:nonconvex-gen}) to obtain an $\epsilon$-accurate solution is $O(n + n^{2/3}/\epsilon)$.
\end{corollary}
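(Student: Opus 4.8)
The plan is to combine the iteration-complexity bound of Theorem~\ref{thm:nonconvex-gen} with a direct count of the IFO calls made by Algorithm~\ref{alg:saga}. First I would require the right-hand side of the bound in Theorem~\ref{thm:nonconvex-gen} to be at most $\epsilon$ and solve for the number of iterations: since the preliminaries stipulate that $f(x^0)-f(x^*)$ and $L$ are constants, the condition
\begin{align*}
\frac{12Ln^{2/3}[f(x^0)-f(x^*)]}{T} \le \epsilon
\end{align*}
is met as soon as $T = O(n^{2/3}/\epsilon)$. Thus $O(n^{2/3}/\epsilon)$ iterations suffice to guarantee $\mathbb{E}[\|\nabla f(x_a)\|^2]\le\epsilon$, i.e., an $\epsilon$-accurate solution in the sense of Definition~\ref{def:eps-accurate}.

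Next I would account for the IFO calls. As already noted in the text preceding the corollary, each pass through the loop (Lines~4--8) issues only a constant number of IFO queries --- namely for $\nabla f_{i_t}(x^t)$, $\nabla f_{i_t}(\alpha_{i_t}^t)$, and the gradient needed to refresh a single table entry in Line~8 --- so the loop contributes $O(T)=O(n^{2/3}/\epsilon)$ IFO calls. The only remaining cost is the initialization in Line~2, which forms $g^0=\tfrac1n\sum_{i=1}^n \nabla f_i(\alpha_i^0)$ and hence uses $n$ IFO calls. Summing the two contributions gives a total IFO complexity of $O(n) + O(n^{2/3}/\epsilon) = O(n + n^{2/3}/\epsilon)$, as claimed.

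This argument is essentially bookkeeping and presents no real obstacle; the only point that needs care is to not forget the one-time $O(n)$ cost of building the initial gradient table, since dropping it would spuriously suggest a complexity of $O(n^{2/3}/\epsilon)$ that is not valid when $\epsilon$ is large relative to $n^{-1/3}$. With the additive $n$ term included, the stated bound $O(n+n^{2/3}/\epsilon)$ follows immediately, and one can further observe that it improves on the $O(n/\epsilon)$ complexity of \gd and the $O(1/\epsilon^2)$ complexity of \sgd in the relevant regimes.
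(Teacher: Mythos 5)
Your proposal is correct and follows the same route the paper sketches: invert the bound of Theorem~\ref{thm:nonconvex-gen} to get $T = O(n^{2/3}/\epsilon)$, charge $O(1)$ IFO calls per iteration, and add the one-time $n$ calls for computing $g^0$. You simply spell out the bookkeeping that the paper states in one line.
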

This corollary follows from the $O(1)$ per iteration cost of Algorithm~\ref{alg:saga} and because $n$ IFO calls are required to calculate $g^0$ at the start of the algorithm. In special cases, the initial $O(n)$ IFO calls can be avoided (refer to \cite{Schmidt13,Defazio14} for details). By comparing the IFO complexity of $\saga$ ($O(n + n^{2/3}/\epsilon)$) with that of $\gd$ ($O(n/\epsilon)$), we see that $\saga$ is faster than $\gd$ by a factor of $n^{1/3}$.

\section{Finite Sums with Regularization}
\label{sec:reg}
In this section, we study the problem of finite-sum problems with additional regularization. More specifically, we consider problems of the form
\begin{equation}
  \label{eq:reg-1}
  \min_{x\in \reals^d}\ f(x) := \frac{1}{n}\sum_{i=1}^n f_i(x) + r(x),
\end{equation}
where $r:\mathbb{R}^d \rightarrow \mathbb{R}$ is an $L$-smooth (possibly nonconvex) function. Problems of this nature arise in machine learning where the functions $f_i$ are loss functions and $r$ is a regularizer. Since we assumed $r$ to be smooth, \eqref{eq:reg-1} can be reformulated as \eqref{eq:1} by simply encapsulating $r$ into the functions $f_i$. However, as we will see, it is  beneficial to handle the regularization separately. We call the variant of $\saga$ with the additional regularization as $\regsaga$. The key difference between $\regsaga$ and $\saga$ lies in Line 6 of Algorithm~\ref{alg:saga}. In particular, for $\regsaga$, Line 6 of Algorithm~\ref{alg:saga} is replaced with the following update:
\begin{align}
\label{eq:reg-saga-update}
x^{t+1} = x^{t} - \eta(v^t + \nabla r(x^t)).
\end{align}
Note that the primary difference is that the part of gradient based on function $r$ is updated at each iteration of the algorithm. The convergence of $\regsaga$ can be proved along the lines of our analysis of $\saga$. Hence, we omit the details for brevity and directly state the following key result stating the IFO complexity of $\regsaga$.
\begin{theorem}
\label{thm:reg-saga-nonconvex-oracle}
If function $f$ is of the form in~\eqref{eq:reg-1}, then the IFO complexity of $\regsaga$ to obtain an $\epsilon$-accurate solution is $O(n + n^{2/3}/\epsilon)$.
\end{theorem}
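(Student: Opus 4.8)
The plan is to re-run the Lyapunov-function argument that proves the \saga bounds (Lemma~\ref{lem:nonconvex-saga} and Theorems~\ref{thm:nonconvex-inter}--\ref{thm:nonconvex-gen}), tracking only the single change introduced by the $\regsaga$ update~\eqref{eq:reg-saga-update}. Write $f = g + r$ with $g(x) = \tfrac1n\sum_i f_i(x)$. Since each $f_i$ and $r$ are $L$-smooth, $g$ is $L$-smooth and $f$ is $2L$-smooth, so every smoothness estimate in the \saga proof holds with $L$ replaced by a constant multiple of $L$; because the step sizes in Theorem~\ref{thm:nonconvex-gen} are already of order $\Theta(1/(Ln^{2/3}))$ and $\Theta(L/n^{1/3})$, this affects only absolute constants, not the $O(\cdot)$ rate. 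The $\regsaga$ search direction is $w^t := v^t + \nabla r(x^t)$, and since $\mathbb{E}[v^t] = \nabla g(x^t)$ while $\nabla r(x^t)$ is deterministic given $x^t$, we have $\mathbb{E}[w^t] = \nabla f(x^t)$ — the only property of the direction used in the descent step.

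First I would apply the $2L$-smoothness of $f$ to $x^{t+1} = x^t - \eta w^t$ to get
\[
  \mathbb{E}[f(x^{t+1})] \leq \mathbb{E}[f(x^t)] - \eta\,\mathbb{E}\|\nabla f(x^t)\|^2 + L\eta^2\,\mathbb{E}\|w^t\|^2 .
\]
Then I would split off the deterministic part of $w^t$: $\mathbb{E}\|w^t\|^2 = \|\nabla f(x^t)\|^2 + \mathbb{E}\|v^t - \nabla g(x^t)\|^2$, so the regularizer contributes \emph{nothing} to the variance, and $\mathbb{E}\|v^t - \nabla g(x^t)\|^2 \le \tfrac{L^2}{n}\sum_i\|x^t-\alpha_i^t\|^2$ is controlled exactly as in the proof of Lemma~\ref{lem:nonconvex-saga}, using only the $L$-smoothness of the individual $f_i$. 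This is precisely where keeping $r$ outside the incremental machinery pays off: had we folded $r$ into each $f_i$, a term $\nabla r(x^t) - \nabla r(\alpha_i^t)$ would reappear inside the variance and inflate the bound.

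Next I would introduce the same potential $R^t = \mathbb{E}[f(x^t) + (c_t/n)\sum_i\|x^t-\alpha_i^t\|^2]$ and bound $R^{t+1}$. The $\alpha$-update is unchanged, so expanding $\|x^{t+1}-\alpha_i^{t+1}\|^2$ via $x^{t+1}-x^t = -\eta w^t$ and Young's inequality with parameter $\beta$ proceeds verbatim, the only modification being the harmless substitution of $v^t$ by $w^t$ inside expectations (whose second moment is bounded as above). This yields the same recursion $c_t = c_{t+1}(1 - \tfrac1n + \eta\beta + 2\eta^2L^2) + \eta^2L^3$ up to constants, and an inequality $\mathbb{E}\|\nabla f(x^t)\|^2 \le (R^t - R^{t+1})/\Gamma_t$ with $\Gamma_t$ of the form~\eqref{eq:Gamma-t}. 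Telescoping with $c_T = 0$ and choosing $\eta$ and $\beta$ as in Theorem~\ref{thm:nonconvex-gen} gives $\gamma_n = \Omega(1/(Ln^{2/3}))$, hence $\mathbb{E}\|\nabla f(x_a)\|^2 = O(Ln^{2/3}[f(x^0)-f(x^*)]/T)$. Since each iteration still uses $O(1)$ IFO calls (the extra $\nabla r$ evaluation is a single, non-incremental gradient) and $O(n)$ calls are needed to form $g^0$, obtaining an $\epsilon$-accurate point costs $O(n + n^{2/3}/\epsilon)$ IFO calls.

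I expect the main obstacle to be purely bookkeeping: confirming that the cross terms between the stochastic $v^t$ and the deterministic $\nabla r(x^t)$ collapse as claimed, and that no step of the \saga proof secretly relied on more than $L$-smoothness of the \emph{individual} $f_i$ (as opposed to smoothness of $f$ as a whole, which is now only $2L$). Once those are verified, the convergence rate is identical to that of plain \saga, and Theorem~\ref{thm:reg-saga-nonconvex-oracle} follows.
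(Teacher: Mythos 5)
Your proposal is correct and follows exactly the route the paper intends: the paper omits the proof of Theorem~\ref{thm:reg-saga-nonconvex-oracle}, stating only that it mirrors Theorem~\ref{thm:nonconvex-gen} with the $r$-update handled explicitly, and your argument fills in precisely that sketch — the deterministic $\nabla r(x^t)$ preserves unbiasedness of the search direction and adds nothing to the variance, so the Lyapunov recursion, the choice of $\eta$ and $\beta$, and the resulting $O(n + n^{2/3}/\epsilon)$ IFO count go through with only constant-factor changes from the $2L$-smoothness of $f = g + r$. Your observation that the variance bound relies only on $L$-smoothness of the individual $f_i$ (not of $f$) correctly identifies the one point that needs checking.
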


The proof essentially follows along the lines of the proof of Theorem~\ref{thm:nonconvex-gen}. The difference, however, being that the update corresponding to function $r(x)$ is handled explicitly at each iteration. Note that the above IFO complexity is not different from that in Corollary~\ref{cor:saga-nonconvex-oracle}. However, its main benefit comes in the form of storage efficiency in problems with more structure. To understand this, consider the problems of form
\begin{equation}
  \label{eq:reg-loss}
  \min_{x\in \reals^d}\ f(x) := \frac{1}{n}\sum_{i=1}^n l(x^{\top}z_i) + r(x),
\end{equation}
where $z_i \in \mathbb{R}^d$ for $i \in [n]$ while $l :\mathbb{R} \rightarrow \mathbb{R}_{\ge 0}$ is a differentiable loss function. Here, $l$ and $r$ can be in general nonconvex. Such problems are popularly known as (regularized) empirical risk minimization in machine learning literature. We can directly apply $\saga$ to \eqref{eq:reg-loss} by casting it in the form \eqref{eq:1}. However, recall that the storage cost of $\saga$ is $O(nd)$ due to the cost of storing the gradient at $\alpha_i^t$. This storage cost can be avoided in $\regsaga$ by handling the function $r$ separately. Indeed, for $\regsaga$ we need to store just $\nabla l(x^\top z_i)$ for all $i \in [n]$ (as $\nabla r(x)$ is updated at each iteration). By observing that $ \nabla l(x^\top z_i) = l'(x^\top z_i) z_i$, where $l'$ represents the derivative of $l$, it is apparent that we need to store only the scalars $l'(x^\top z_i)$ for $\regsaga$. This reduces the storage $O(n)$ instead of $O(nd)$ in $\saga$.

%\begin{algorithm}[tb]\small
%   \caption{REG-SAGA$\left(x^0,T, \eta\right)$}
%   \label{alg:saga-reg}
%\begin{algorithmic}[1]
%   \STATE {\bfseries Input:} $x^0 \in \mathbb{R}^d$, $\alpha_{i}^0 = x^0$ for $i \in [n]$,  step size $\eta > 0$
%   \STATE $g^{0} = \frac{1}{n} \sum_{i=1}^n \nabla f_{i}(\alpha_i^{0})$
%   \FOR{$t=0$ {\bfseries to} $T-1$}
%   \STATE Uniformly randomly pick $i_t, j_t$ from $[n]$ 
%   \STATE $v^t =  \nabla f_{i_t}(x^t) - \nabla f_{i_t}(\alpha_{i_t}^{t}) + g^{t}$
%   \STATE $x^{t+1} = x^{t} - \eta_t (v^t + r(x^t))$
%   \STATE $\alpha_{j_t}^{t+1} = x^t$ and $\alpha_{j}^{t+1} = \alpha_{j}^{t}$ for $j \neq j_t$
%   \STATE $g^{t+1} = g^t - \frac{1}{n}(\nabla f_{j_t}(\alpha_{j_t}^t) - \nabla f_{j_t}(\alpha_{j_t}^{t+1}))$
%   \ENDFOR
%   \STATE {\bfseries Output:} Iterate $x_a$ chosen uniformly random from $\{x^t\}_{t=0}^{T-1}$.
%\end{algorithmic}
%\end{algorithm}

\section{Linear convergence rates for gradient dominated functions}
Until now the only assumption we used was Lipschitz continuity of gradients. An immediate question is whether the IFO complexity can be further improved under stronger assumptions. We provide an affirmative answer to this question by showing that for gradient dominated functions, a variant of $\saga$ attains linear convergence rate. Recall that a function $f$ is called $\tau$-gradient dominated if around an optimal point $x^*$, $f$ satisfies the following growth condition: 
\begin{equation*}
  f(x) - f(x^*) \leq \tau \|\nabla f(x)\|^2,\quad\forall x\in \reals^d.
\end{equation*}
For such functions, we use the variant of $\saga$ shown in Algorithm~\ref{alg:gd-saga}. Observe that Algorithm~\ref{alg:gd-saga} uses $\saga$ as a subroutine. Alternatively, one can rewrite Algorithm~\ref{alg:gd-saga} in the form of $KT$ iterations of Algorithm~\ref{alg:saga} where one updates $\{\alpha_i\}$ after every $T$ iterations and then selects a random iterate amongst the last $T$ iterates. % TODO: confusing sentence: That is, while $\saga$ uses a randomly selected iterate among all the iterates, GD-\saga uses an iterate randomly selected from among last $T$ iterations from $KT$ iterations.
For this variant of $\saga$, we can prove the following linear convergence result.

\begin{algorithm}[tb]\small
   \caption{GD-SAGA$\left(x^0, K, T, \eta \right)$}
   \label{alg:gd-saga}
\begin{algorithmic}
   \STATE {\bfseries Input:} $x^0 \in \mathbb{R}^d$, $K$,  epoch length $m$, step sizes $\eta > 0$
   \FOR{$k=0$ to $K$}
   \STATE $x^{k} = \text{SAGA}(x^{k-1},T,\eta)$
   \ENDFOR
   \STATE {\bfseries Output:} $x^K$
\end{algorithmic}
\end{algorithm}

\begin{theorem}
  \label{thm:gd-saga-thm-nabla}
  If $f$ is $\tau$-gradient dominated, then with $\eta = 1/(3Ln^{2/3})$ and $T = \lceil 24L\tau n^{2/3} \rceil$, the iterates of Algorithm~\ref{alg:gd-saga} satisfy
  $\mathbb{E}[\|f(x^k)\|^2] \leq 2^{-k} \|f(x^0)\|^2,$
  where $x^*$ is an optimal solution of~\eqref{eq:1}.
\end{theorem}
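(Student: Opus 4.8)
The plan is to establish the geometric decay of the squared gradient norm, i.e.\ $\mathbb{E}[\|\nabla f(x^k)\|^2] \leq 2^{-k}\|\nabla f(x^0)\|^2$ (the quantity indicated by the ``nabla'' in the theorem label), by combining the per-epoch guarantee of Theorem~\ref{thm:nonconvex-gen} with the gradient-dominance inequality~\eqref{eq:2}. The engine of the argument is a single inner \saga epoch: invoking Theorem~\ref{thm:nonconvex-gen} on the call $x^k = \text{SAGA}(x^{k-1},T,\eta)$ produces a contraction of the gradient norm by a constant factor per epoch, and the choice $T = \lceil 24L\tau n^{2/3}\rceil$ drives that factor down to $1/2$.

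First I would fix $k$ and condition on all randomness up to the start of the $k$-th epoch, so that $x^{k-1}$ is treated as a fixed initial point for the inner \saga run. Applying Theorem~\ref{thm:nonconvex-gen} to this run (whose output $x^k$ plays the role of the random iterate $x_a$) gives
\begin{align*}
\mathbb{E}\bigl[\|\nabla f(x^k)\|^2 \mid x^{k-1}\bigr] \leq \frac{12Ln^{2/3}\,[f(x^{k-1}) - f(x^*)]}{T}.
\end{align*}
Next I would apply the $\tau$-gradient-dominance property~\eqref{eq:2} to the right-hand side, replacing the function-value gap by the squared gradient norm at $x^{k-1}$:
\begin{align*}
\mathbb{E}\bigl[\|\nabla f(x^k)\|^2 \mid x^{k-1}\bigr] \leq \frac{12L\tau n^{2/3}}{T}\,\|\nabla f(x^{k-1})\|^2.
\end{align*}

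Substituting $T \geq 24L\tau n^{2/3}$ makes the prefactor at most $1/2$, so the conditional bound reads $\mathbb{E}[\|\nabla f(x^k)\|^2 \mid x^{k-1}] \leq \tfrac12\|\nabla f(x^{k-1})\|^2$. Taking total expectation via the tower property removes the conditioning and yields $\mathbb{E}[\|\nabla f(x^k)\|^2] \leq \tfrac12\,\mathbb{E}[\|\nabla f(x^{k-1})\|^2]$. A straightforward induction on $k$, with the deterministic base case $x^0$, then delivers the claimed bound.

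The main obstacle---indeed essentially the only delicate point---is the probabilistic bookkeeping across epochs. Theorem~\ref{thm:nonconvex-gen} is stated for a \emph{deterministic} initial point, whereas within Algorithm~\ref{alg:gd-saga} each epoch starts from the random output $x^{k-1}$ of the preceding epoch. The resolution is that the internal randomness of the $k$-th epoch (the sampled indices $i_t,j_t$ and the final uniform choice among the last $T$ iterates) is independent of the history that produced $x^{k-1}$; hence conditioning on $x^{k-1}$ legitimately reduces the inner epoch to the fixed-start setting of Theorem~\ref{thm:nonconvex-gen}, and the law of total expectation lets the contraction compound multiplicatively. I would also point out explicitly that each epoch re-initializes the stored gradients via $\alpha_i = x^{k-1}$, which is exactly the hypothesis ($\alpha_i^0 = x^0$) under which Theorem~\ref{thm:nonconvex-gen} was derived, so the per-epoch guarantee transfers verbatim.
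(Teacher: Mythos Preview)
Your proposal is correct and follows essentially the same route as the paper: apply Theorem~\ref{thm:nonconvex-gen} to a single epoch to obtain $\mathbb{E}[\|\nabla f(x^k)\|^2]\le \tfrac{1}{2\tau}\,\mathbb{E}[f(x^{k-1})-f(x^*)]$, then invoke $\tau$-gradient dominance and iterate. If anything, your treatment is more careful than the paper's, which omits the conditioning/tower-property argument and the remark that each epoch re-initializes $\alpha_i^0=x^{k-1}$.
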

\begin{proof}
  The iterates of Algorithm~\ref{alg:gd-saga} satisfy the bound
\begin{align}
\label{eq:gd-saga-thm-eq}
\mathbb{E}[\| \nabla f(x^k) \|^2] \leq \frac{\mathbb{E}[f(x^{k-1}) - f(x^*)]}{2\tau},
\end{align}
which holds due to Theorem~\ref{thm:nonconvex-gen} given the choices of $\eta$ and $T$ assumed in the statement. However, $f$ is $\tau$-gradient dominated, so $\mathbb{E}[\| \nabla f(x^{k-1}) \|^2] \geq  \mathbb{E}[f(x^{k-1}) - f(x^*)]/\tau$, which combined with~\eqref{eq:gd-saga-thm-eq} concludes the proof.
\end{proof}

An immediate consequence of this theorem is the following.
\begin{corollary}
  \label{cor:saga-gd-oracle}
  If $f$ is $\tau$-gradient dominated, the IFO complexity of Algorithm~\ref{alg:gd-saga} (with parameters from Theorem~\ref{thm:gd-saga-thm-nabla}) to compute an $\epsilon$-accurate solution is $O((n + \tau n^{2/3}) \log(1/\epsilon))$.
\end{corollary}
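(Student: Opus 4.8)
The plan is to simply count IFO calls across the $K$ outer iterations of Algorithm~\ref{alg:gd-saga}, using Theorem~\ref{thm:gd-saga-thm-nabla} to control $K$ and Corollary~\ref{cor:saga-nonconvex-oracle} (or rather the per-call bookkeeping behind it) to control the cost of each inner \saga run. First I would fix the parameters $\eta = 1/(3Ln^{2/3})$ and $T = \lceil 24L\tau n^{2/3}\rceil$ as in Theorem~\ref{thm:gd-saga-thm-nabla}, so that each outer step $k$ runs \saga for $T = O(\tau n^{2/3})$ inner iterations. Each inner iteration of Algorithm~\ref{alg:saga} costs $O(1)$ IFO calls, and each invocation of \saga additionally pays a one-time $O(n)$ cost to compute $g^0$ (here one should note that since the outer loop re-initializes $\alpha_i^{k-1} = x^{k-1}$ at the start of each call, this $O(n)$ cost genuinely recurs each outer iteration). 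Hence one outer iteration costs $O(n + T) = O(n + \tau n^{2/3})$ IFO calls.

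Next I would determine how many outer iterations $K$ are needed. By Theorem~\ref{thm:gd-saga-thm-nabla}, $\mathbb{E}[\|\nabla f(x^k)\|^2] \le 2^{-k}\,[f(x^0) - f(x^*)]/\tau$ (reading the theorem's statement with the $\tau$-gradient-domination normalization made explicit, since $\|\nabla f(x^0)\|^2$ and $[f(x^0)-f(x^*)]/\tau$ are of the same order by~\eqref{eq:2}), and recalling our standing assumption that $f(x^0) - f(x^*)$ is a constant. To drive this below $\epsilon$ it suffices to take $K = \lceil \log_2(1/\epsilon) \rceil + O(1) = O(\log(1/\epsilon))$.

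Finally I would multiply: the total IFO complexity is $K$ times the per-outer-iteration cost, namely $O\big((n + \tau n^{2/3})\log(1/\epsilon)\big)$, which is the claimed bound. The only mildly delicate point — and the one I would state carefully rather than the routine arithmetic — is making sure the $O(n)$ initialization cost of each inner \saga call is accounted for in every outer iteration rather than just once; but since it is dominated by nothing and simply adds into the $(n + \tau n^{2/3})$ factor, it does not change the final rate. Everything else is a direct substitution of the parameter choices and a geometric-decay argument, so no genuine obstacle arises.
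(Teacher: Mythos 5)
Your proposal is correct and matches the intended argument: the paper treats this corollary as an immediate consequence of Theorem~\ref{thm:gd-saga-thm-nabla}, and the proof is exactly the bookkeeping you give --- $O(n+T)=O(n+\tau n^{2/3})$ IFO calls per outer iteration (including the recurring $O(n)$ initialization of $g^0$) times $K=O(\log(1/\epsilon))$ outer iterations from the geometric decay. Your side remark about reading the theorem's conclusion via the gradient-domination normalization and the standing assumption that $f(x^0)-f(x^*)$ is constant is also the right way to justify the choice of $K$.
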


While we state the result in terms of $\|\nabla f(x)\|^2$, it is not hard to see that for gradient dominated functions a similar result holds for the convergence criterion being $[f(x)- f(x^*)]$.

\begin{theorem}
  \label{thm:gd-saga-thm}
  If $f$ is $\tau$-gradient dominated, with $\eta = 1/(3Ln^{2/3})$ and $T = \lceil 24L\tau n^{2/3} \rceil$, the iterates of Algorithm~\ref{alg:gd-saga} satisfy
  $$\mathbb{E}[f(x^k) - f(x^*)] \leq 2^{-k}[f(x^0) - f(x^*)],$$
  where $x^*$ is an optimal solution to~\eqref{eq:1}.
\end{theorem}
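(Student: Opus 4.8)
The plan is to mirror the proof of Theorem~\ref{thm:gd-saga-thm-nabla}, but run the recursion on the suboptimality gap $[f(x^k)-f(x^*)]$ rather than on $\|\nabla f(x^k)\|^2$. The starting point is the same descent estimate that Theorem~\ref{thm:nonconvex-gen} provides for a single call of \textsc{SAGA}: with $\eta = 1/(3Ln^{2/3})$, each invocation $x^k = \textsc{SAGA}(x^{k-1},T,\eta)$ satisfies
\begin{align*}
\mathbb{E}[\|\nabla f(x^k)\|^2] \;\leq\; \frac{12Ln^{2/3}\,\mathbb{E}[f(x^{k-1}) - f(x^*)]}{T},
\end{align*}
where the expectation on the right accounts for the randomness in the previous epochs. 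With the choice $T = \lceil 24L\tau n^{2/3}\rceil \geq 24L\tau n^{2/3}$, the prefactor $12Ln^{2/3}/T$ is at most $1/(2\tau)$, so $\mathbb{E}[\|\nabla f(x^k)\|^2] \leq \mathbb{E}[f(x^{k-1})-f(x^*)]/(2\tau)$.

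Next I would apply the $\tau$-gradient dominance inequality at the point $x^k$: since $f(x^k)-f(x^*) \leq \tau\|\nabla f(x^k)\|^2$ pointwise, taking expectations gives $\mathbb{E}[f(x^k)-f(x^*)] \leq \tau\,\mathbb{E}[\|\nabla f(x^k)\|^2]$. Chaining this with the previous display yields the one-epoch contraction
\begin{align*}
\mathbb{E}[f(x^k) - f(x^*)] \;\leq\; \tau \cdot \frac{\mathbb{E}[f(x^{k-1}) - f(x^*)]}{2\tau} \;=\; \tfrac{1}{2}\,\mathbb{E}[f(x^{k-1}) - f(x^*)].
\end{align*}
Unrolling this recursion from $k$ down to $0$ gives $\mathbb{E}[f(x^k)-f(x^*)] \leq 2^{-k}[f(x^0)-f(x^*)]$, which is exactly the claimed bound.

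There is essentially no serious obstacle here; the argument is a direct variant of the preceding theorem. The one point requiring a little care is the order in which gradient dominance and the \textsc{SAGA} descent estimate are applied: in Theorem~\ref{thm:gd-saga-thm-nabla} gradient dominance is used at $x^{k-1}$ to lower-bound $\|\nabla f(x^{k-1})\|^2$, whereas here it is cleaner to use it at $x^{k}$ to convert the guarantee $\mathbb{E}[\|\nabla f(x^k)\|^2] \leq \mathbb{E}[f(x^{k-1})-f(x^*)]/(2\tau)$ directly into a statement about $\mathbb{E}[f(x^k)-f(x^*)]$. One should also note that the $\lceil\cdot\rceil$ in the definition of $T$ only helps (it makes $T$ larger, hence the contraction factor $12Ln^{2/3}/T$ no larger than $1/(2\tau)$), and that all expectations are over the full randomness of Algorithm~\ref{alg:gd-saga} through epoch $k$, so the nested conditioning is handled simply by the tower property.
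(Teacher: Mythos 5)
Your argument is correct and is exactly the route the paper intends: it reuses the per-epoch bound $\mathbb{E}[\|\nabla f(x^k)\|^2] \leq \mathbb{E}[f(x^{k-1})-f(x^*)]/(2\tau)$ from the proof of Theorem~\ref{thm:gd-saga-thm-nabla} and simply applies gradient dominance at $x^k$ instead of $x^{k-1}$ to turn it into a contraction on the function-value gap. The paper leaves this adaptation as an unstated "it is not hard to see," and your write-up supplies it correctly, including the observation that the ceiling in $T$ only strengthens the contraction factor.
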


A noteworthy aspect of the above result is the linear convergence rate to a \emph{global} optimum. Therefore, the above result is stronger than Theorem~\ref{thm:nonconvex-gen}. Note that throughout our analysis of gradient dominated functions, no assumptions other than Lipschitz smoothness are placed on the \emph{individual} set of functions $f_i$. We emphasize here that these results can be further improved with additional assumptions (e.g., strong convexity) on the individual functions $f_i$ and on $f$. Also note that $\gd$ can achieve linear convergence rate for gradient dominated functions \cite{Polyak1963}. However, the IFO complexity of $\gd$ is $O(\tau n \log(1/\epsilon))$, which is strictly worse than IFO complexity of GD-$\saga$ (see Corollary~\ref{cor:saga-gd-oracle}).

\section{Minibatch Variant} 
\label{sec:minibatch}
A common variant of incremental methods is to sample a set of indices $I_t$ instead of single index $i_t$ when approximating the gradient. Such a variant is generally referred to as a ``minibatch'' version of the algorithm. Minibatch variants are of great practical significance since they reduce the variance of incremental methods and promote parallelism. Algorithm~\ref{alg:minibatch-saga} lists the pseudocode for a minibatch variant of $\saga$. Algorithm~\ref{alg:minibatch-saga} uses a set $I_t$ of size $|I_t| = b$ for calculating the update $v^t$ instead of a single index $i_t$ used in Algorithm~\ref{alg:saga}. By using a larger $b$, one can reduce the variance due to the stochasticity in the algorithm. Such a procedure is also beneficial in parallel settings since the calculation of the update $v^t$ can be parallelized. For this algorithm, we can prove the following convergence result.

\begin{theorem}
\label{thm:nonconvex-minibatch}
Suppose $f \in \Fc_n$. Let $\eta = b/(3Ln^{2/3})$ and $\beta = L/n^{1/3}$. Then for the output $x_a$ of Algorithm~\ref{alg:minibatch-saga} (with $b < n^{2/3}$) we have $\gamma_n \geq \frac{b}{12Ln^{2/3}}$ and
\begin{align*}
\mathbb{E}[\|\nabla f(x_a)\|^2] &\leq \frac{12Ln^{2/3} [f(x^{0}) - f(x^*)]}{bT},
\end{align*} 
where $x^*$ is an optimal solution to~\eqref{eq:1}.
\end{theorem}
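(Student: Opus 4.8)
The plan is to mirror the three-step argument used for Theorem~\ref{thm:nonconvex-gen}, adapting each estimate to account for the minibatch size $b$. First I would establish the minibatch analogue of Lemma~\ref{lem:nonconvex-saga}. The update is now $v^t = \frac{1}{b}\sum_{i\in I_t}\bigl(\nabla f_i(x^t) - \nabla f_i(\alpha_i^t)\bigr) + g^t$, which remains unbiased. The key effect of minibatching is variance reduction: when $I_t$ is sampled (say, uniformly without replacement, or with replacement), the variance of $v^t$ around $\nabla f(x^t)$ picks up a factor of roughly $1/b$ relative to the single-sample case. Concretely, the term $\mathbb{E}\|v^t - \nabla f(x^t)\|^2$ that appears when expanding $\mathbb{E}\|x^{t+1}-x^t\|^2 = \eta^2\mathbb{E}\|v^t\|^2$ will be bounded by something like $\frac{2L^2}{bn}\sum_i \|x^t-\alpha_i^t\|^2$ (up to constants depending on the sampling scheme), compared to $\frac{2L^2}{n}\sum_i\|x^t-\alpha_i^t\|^2$ without minibatching. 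Carrying this $1/b$ improvement through the Lyapunov/recursion argument, I expect the recursion for $c_t$ to become $c_t = c_{t+1}\bigl(1 - \tfrac{1}{n} + \eta\beta + \tfrac{2\eta^2L^2}{b}\bigr) + \tfrac{\eta^2L^3}{b}$, and the per-step quantity to become $\Gamma_t = \eta - \tfrac{c_{t+1}\eta}{\beta} - \eta^2 L - \tfrac{2c_{t+1}\eta^2}{b}$.

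Second, I would run the telescoping argument exactly as in Theorem~\ref{thm:nonconvex-inter}: summing the per-iteration bound $\mathbb{E}[\|\nabla f(x^t)\|^2]\le (R^t - R^{t+1})/\Gamma_t$ over $t=0,\dots,T-1$, using $c_T = 0$ and $\alpha_i^0 = x^0$, gives $\mathbb{E}[\|\nabla f(x_a)\|^2]\le \frac{f(x^0)-f(x^*)}{T\gamma_n}$ with $\gamma_n = \min_t \Gamma_t$. This step is essentially unchanged.

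Third comes the parameter substitution, which is where the stated constants must be checked. With $\eta = b/(3Ln^{2/3})$ and $\beta = L/n^{1/3}$, set $\theta = \tfrac{1}{n} - \eta\beta - \tfrac{2\eta^2L^2}{b}$; using $b < n^{2/3}$ one verifies $\theta \ge 4/(9n)$ just as before (the $b$-scaling of $\eta$ is chosen precisely so that $\eta\beta = \tfrac{b}{3n}$ and $\tfrac{2\eta^2L^2}{b} = \tfrac{2b}{9n^{4/3}}$ stay controlled). Solving the recursion gives $c_t = \tfrac{\eta^2L^3}{b}\cdot\tfrac{1-(1-\theta)^{T-t}}{\theta} \le \tfrac{\eta^2L^3}{b\theta} \le \tfrac{L}{4n^{1/3}}$, the same bound on $c_t$ as in~\eqref{eq:c-t-upperbound}. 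Then the three inequalities $c_{t+1}\eta/\beta \le \eta/4$, $\eta^2 L \le \eta/3$, and $2c_{t+1}\eta^2/b \le \eta/6$ hold, yielding $\gamma_n \ge \eta/(4) = \tfrac{b}{12Ln^{2/3}}$; plugging into the telescoped bound gives the claimed rate.

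The main obstacle I anticipate is getting the minibatch variance bound in Step~1 with the correct $1/b$ dependence and clean constants — specifically, bounding $\mathbb{E}\bigl\|\tfrac{1}{b}\sum_{i\in I_t}(\nabla f_i(x^t)-\nabla f_i(\alpha_i^t)) - \tfrac{1}{n}\sum_i(\nabla f_i(x^t)-\nabla f_i(\alpha_i^t))\bigr\|^2$ and tracking how the sampling scheme for $I_t$ (with vs.\ without replacement) interacts with the simultaneous random choice of which $\alpha_{j}$ to refresh. Once that lemma is in place with the $b$-scaled recursion and $\Gamma_t$, the remaining algebra is a routine re-run of the $b=1$ proof with $\eta$ rescaled by $b$.
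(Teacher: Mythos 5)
Your overall strategy is the intended one (the paper itself omits the proof, saying only that it parallels Theorem~\ref{thm:nonconvex-gen}), and your minibatch variance bound with the $1/b$ factor is correct for sampling with replacement. However, there is a genuine gap in your Step~1 that propagates fatally into Step~3: you kept the decay factor $1-\tfrac1n$ in the recursion for $c_t$, but Line~7 of Algorithm~\ref{alg:minibatch-saga} refreshes \emph{all} $\alpha_j$ with $j\in J_t$, a set of size $b$. So in the analogue of~\eqref{eq:aux-term} the coefficient of $\mathbb{E}\|x^{t+1}-\alpha_i^t\|^2$ is $(1-\tfrac1n)^b$ (the probability that index $i$ is not refreshed), not $\tfrac{n-1}{n}$, and the recursion should read roughly $c_t = c_{t+1}\bigl(1-\tfrac{b}{n}+\eta\beta+\tfrac{2\eta^2L^2}{b}\bigr)+\tfrac{\eta^2L^3}{b}$ (up to the lower-order correction $b^2/(2n^2)$). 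With your version, $\theta = \tfrac1n - \eta\beta - \tfrac{2\eta^2L^2}{b} = \tfrac1n - \tfrac{b}{3n} - \tfrac{2b}{9n^{4/3}}$, which is already below $\tfrac{4}{9n}$ for $b=2$ and \emph{negative} for $b\geq 3$; the geometric-sum bound $c_t\leq \eta^2L^3/(b\theta)$ then becomes vacuous, $c_t$ grows with $T-t$, and $\Gamma_t>0$ cannot be established. In short, the enlarged step size $\eta=b/(3Ln^{2/3})$ is only admissible because the stored gradients are refreshed $b$ times faster, and your write-up uses the former without the latter.

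Once the refresh rate is corrected, your remaining algebra goes through essentially verbatim: $\theta \geq b\bigl(\tfrac1n-\tfrac{1}{3n}-\tfrac{2}{9n^{4/3}}\bigr)-\tfrac{b^2}{2n^2} \geq \tfrac{4b}{9n}$ for $b< n^{2/3}$ (after absorbing the $b^2/(2n^2)$ term), hence $c_t \leq \tfrac{\eta^2L^3}{b\theta} \leq \tfrac{L}{4n^{1/3}}$, and the three inequalities $c_{t+1}\eta/\beta\leq\eta/4$, $\eta^2L\leq\eta/3$ (this is where $b< n^{2/3}$ is used), and $2c_{t+1}\eta^2/b\leq\eta/6$ yield $\gamma_n\geq\eta/4=b/(12Ln^{2/3})$ as claimed. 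Your identification of the variance term $\mathbb{E}\bigl\|\tfrac1b\sum_{i\in I_t}\zeta_i-\mathbb{E}[\zeta]\bigr\|^2\leq\tfrac{L^2}{bn}\sum_i\|x^t-\alpha_i^t\|^2$ as the delicate point is right, but the equally important and easier-to-miss point is the $b$-fold refresh of the $\alpha_j$'s.
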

We omit the details of the proof since it is similar to the proof of Theorem~\ref{thm:nonconvex-gen}. Note that the key difference in comparison to Theorem~\ref{alg:saga} is that we can now use a more aggressive step size $\eta = b/(3Ln^{2/3})$ due to a larger minibatch size $b$.  An interesting aspect of the result is the $O(1/b)$ dependence on the minibatch size $b$. As long as this size is not large ($b < n^{2/3}$), one can significantly improve the convergence rate to a stationary point. A restatement of aforementioned result in terms of  IFO complexity is provided below.

\begin{corollary}
\label{cor:saga-nonconvex-oracle-minibatch}
If $f \in \Fc_n$, then the IFO complexity of Algorithm~\ref{alg:minibatch-saga} (with parameters from Theorem~\ref{thm:nonconvex-minibatch} and minibatch size $b <  n^{2/3}$) to obtain an $\epsilon$-accurate solution is $O(n + n^{2/3}/\epsilon)$.
\end{corollary}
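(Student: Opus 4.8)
\textbf{Proof proposal for Corollary~\ref{cor:saga-nonconvex-oracle-minibatch}.}

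The plan is to convert the iteration-complexity bound of Theorem~\ref{thm:nonconvex-minibatch} into an IFO-complexity bound, accounting for the per-iteration cost of the minibatch algorithm. First I would read off from Theorem~\ref{thm:nonconvex-minibatch} that, to guarantee $\mathbb{E}[\|\nabla f(x_a)\|^2] \le \epsilon$, it suffices to take
\begin{align*}
T \;=\; O\!\left(\frac{Ln^{2/3}[f(x^0) - f(x^*)]}{b\,\epsilon}\right) \;=\; O\!\left(\frac{n^{2/3}}{b\,\epsilon}\right),
\end{align*}
where the last step uses the standing assumption that $L$ and $f(x^0)-f(x^*)$ are constants (independent of $n$). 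So the number of \emph{iterations} needed is $O(n^{2/3}/(b\epsilon))$.

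Next I would count IFO calls. Each iteration of Algorithm~\ref{alg:minibatch-saga} evaluates gradients at the $b$ indices in $I_t$ (and updates a comparable number of stored gradients), so the per-iteration cost is $O(b)$ IFO calls rather than the $O(1)$ of Algorithm~\ref{alg:saga}. Multiplying, the cost of the main loop is $O(b) \cdot O(n^{2/3}/(b\epsilon)) = O(n^{2/3}/\epsilon)$, and crucially the $b$ cancels. On top of this, the algorithm pays an initial $O(n)$ IFO calls to form $g^0 = \frac{1}{n}\sum_i \nabla f_i(\alpha_i^0)$, exactly as in Corollary~\ref{cor:saga-nonconvex-oracle}. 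Adding the two contributions gives the claimed $O(n + n^{2/3}/\epsilon)$.

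There is essentially no hard step here — it is a bookkeeping argument — but the one point that deserves care is making sure the constraint $b < n^{2/3}$ from Theorem~\ref{thm:nonconvex-minibatch} is respected so that the step size $\eta = b/(3Ln^{2/3})$ is legitimate and the $\gamma_n \ge b/(12Ln^{2/3})$ bound holds; within this regime the $1/b$ speedup in iteration count exactly offsets the factor-$b$ increase in per-iteration work, which is why the minibatch variant neither helps nor hurts the \emph{serial} IFO complexity (its benefit is in variance reduction and parallelism, as noted in the text). I would close by remarking that this matches the non-minibatch bound of Corollary~\ref{cor:saga-nonconvex-oracle}, as expected.
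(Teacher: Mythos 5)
Your proposal is correct and follows exactly the bookkeeping the paper intends (and leaves implicit): $T = O(n^{2/3}/(b\epsilon))$ iterations from Theorem~\ref{thm:nonconvex-minibatch}, times $O(b)$ IFO calls per iteration so the $b$ cancels, plus the $O(n)$ initialization cost for $g^0$. Nothing is missing.
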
  

\begin{algorithm}[tb]\small
   \caption{Minibatch-SAGA$\left(x^0,b,T,\eta\right)$}
   \label{alg:minibatch-saga}
\begin{algorithmic}[1]
   \STATE {\bfseries Input:} $x^0 \in \mathbb{R}^d$, $\alpha_{i}^0 = x^0$ for $i \in [n]$,  minibatch size $b$, number of iterations $T$, step size $\eta > 0$
   \STATE $g^{0} = \frac{1}{n} \sum_{i=1}^n \nabla f_{i}(\alpha_i^{0})$
   \FOR{$t=0$ {\bfseries to} $T-1$}
   \STATE Uniformly randomly pick (with replacement) indices sets $I_t, J_t$ of size $b$ from $[n]$ 
   \STATE $v^t =  \frac{1}{|I_t|} \sum_{i \in I_t} (\nabla f_{i}(x^t) - \nabla f_{i}(\alpha_{i_t}^{t})) + g^{t}$
   \STATE $x^{t+1} = x^{t} - \eta v^t$
   \STATE $\alpha_{j}^{t+1} = x^t$ for $j \in J_t$ and $\alpha_{j}^{t+1} = \alpha_{j}^{t}$ for $j \notin J_t$
   \STATE $g^{t+1} = g^t - \frac{1}{n} \sum_{j \in J_t}(\nabla f_{j}(\alpha_{j}^t) - \nabla f_{j}(\alpha_{j}^{t+1}))$
   \ENDFOR
   \STATE {\bfseries Output:} Iterate $x_a$ chosen uniformly random from $\{x^t\}_{t=0}^{T-1}$.
\end{algorithmic}
\end{algorithm}

By comparing the above result with Corollary~\ref{cor:saga-nonconvex-oracle}, we can see that the IFO complexity of minibatch-\saga is the same $\saga$. However, since the $b$ gradients can be computed in parallel, one can achieve (theoretical) $b$ times speedup in multicore and distributed settings. In contrast, the performance $\sgd$ degrades with minibatch size $b$ since the improvement in convergence rate for $\sgd$ is typically $O(1/\sqrt{b})$ but $b$ IFO calls are required at each iteration of minibatch-$\sgd$. Thus, $\saga$ has a much more efficient minibatch version in comparison to $\sgd$.

\subsection*{Discussion of Convergence Rates}
Before ending our discussion on convergence rates, it is important to compare and contrast different convergence rates obtained in the paper. For general smooth nonconvex problems, we observed that $\saga$ has a low IFO complexity of $O(n + n^{2/3}/\epsilon)$ in comparison to $\sgd$ ($O(1/\epsilon^2)$) and $\gd$ ($O(n/\epsilon)$). This difference in the convergence is especially significant if one requires a medium to high accuracy solution, i.e., $\epsilon$ is small. 

Furthermore, for gradient dominated functions, where $\sgd$ obtains a sublinear convergence rate of $O(1/\epsilon^2)$\footnote{For $\sgd$, we are not aware of any better convergence rates for gradient dominated functions.} as opposed to fast linear convergence rate of a variant of \saga (see Theorem~\ref{cor:saga-gd-oracle}). It is an interesting future work to explore other setups where we can achieve stronger convergence rates. 

From our analysis of minibatch-\saga in Section~\ref{sec:minibatch}, we observe that $\saga$ profits from mini-batching much more than $\sgd$. In particular, one can achieve a (theoretical) linear speedup using mini-batching in $\saga$ in parallel settings. On the other hand, the performance of $\sgd$ typically degrades with minibatching. In summary, $\saga$ enjoys all the benefits of $\gd$ like constant step size, efficient minibatching with much weaker dependence on $n$.

Notably, $\saga$, unlike $\sgd$, does not use any additional assumption of bounded gradients (see Theorem~\ref{thm:sgd-oracle} and Corollary~\ref{cor:saga-nonconvex-oracle}). Moreover, if one uses a constant step size for $\sgd$, we need to have an advance knowledge of the total number of iterations $T$ in order to obtain the convergence rate mentioned in Theorem~\ref{thm:sgd-oracle}.

\section{Experiments}

We present our empirical results in this section. For our experiments, we study the problem of binary classification using nonconvex regularizers. The input consists of tuples $\{(z_i, y_i)\}_{i=1}^n$ where $z_i \in \mathbb{R}^d$ (commonly referred to as features) and $y_i \in \{-1,1\}$ (class labels). We are interested in the empirical loss minimization setup described in Section~\ref{sec:reg}. Recall that problem of finite sum with regularization takes the form
\begin{equation}
\label{eq:gen-linear-model}
  \min_{x\in \reals^d}\ f(x) := \frac{1}{n}\sum_{i=1}^n f_i(x) + r(x).
\end{equation}
For our experiments, we use logistic function for $f_i$, i.e., $f_i(x) = \log(1+\exp(-y_i x^\top z_i))$ for all $i \in [n]$. All  $z_i$ are normalized such that $\|z_i\| = 1$. We observe that the loss function has Lipschitz continuous gradients.  The function $r(x) = \lambda \sum_{i=1}^d \alpha x_i^2/(1+\alpha x_i^2)$ is chosen as the regularizer (see \cite{Antoniadis2009}). Note that the regularizer is nonconvex and smooth. In our experiments, we use the parameter settings of $\lambda =0.001$ and $\alpha = 1$ for all the datasets.

\begin{figure*}[!t]
\centering
   \begin{minipage}[b]{.24\textwidth}
   \includegraphics[width=\textwidth]{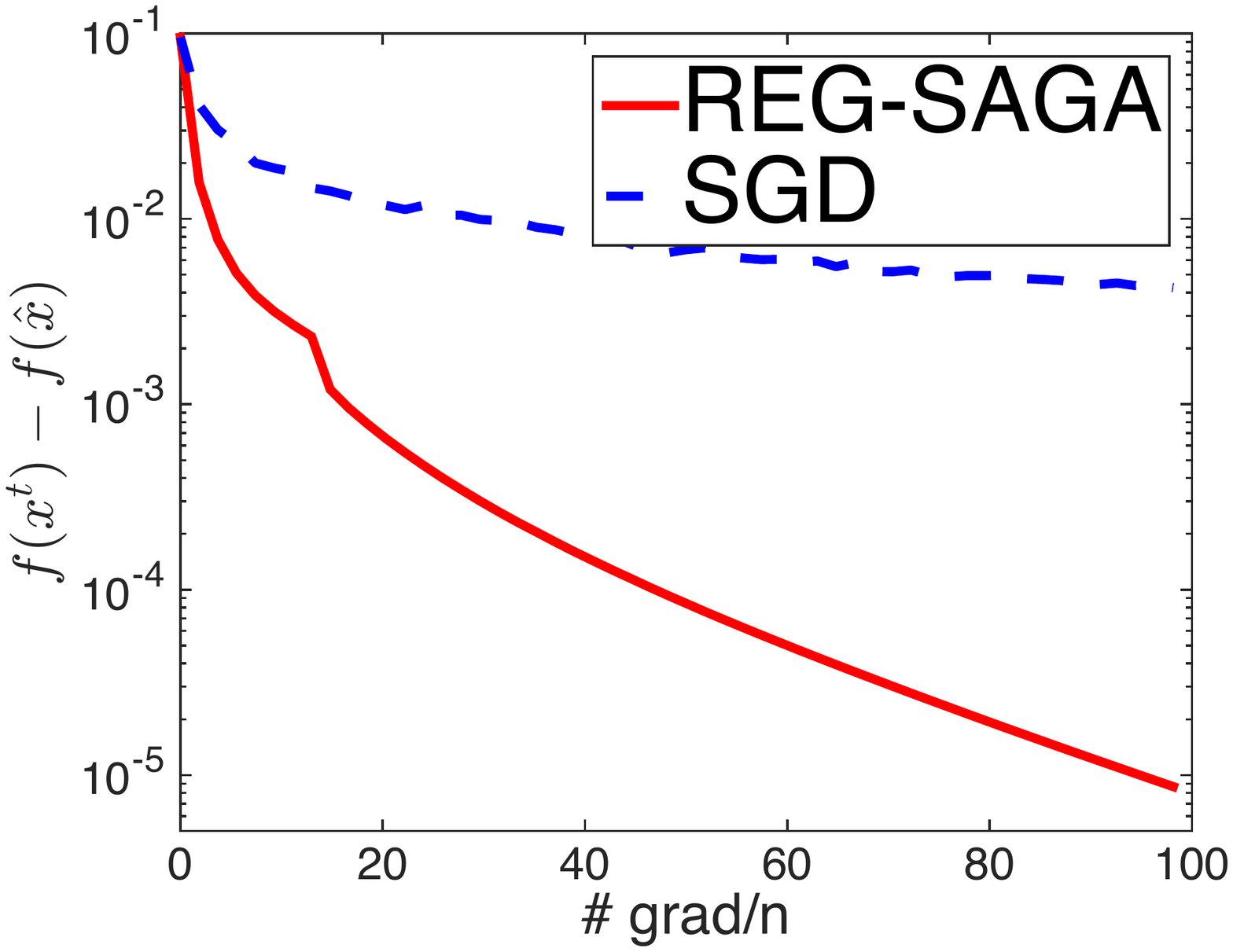}
   \end{minipage} %
   \begin{minipage}[b]{.24\textwidth}
   \includegraphics[width=\textwidth]{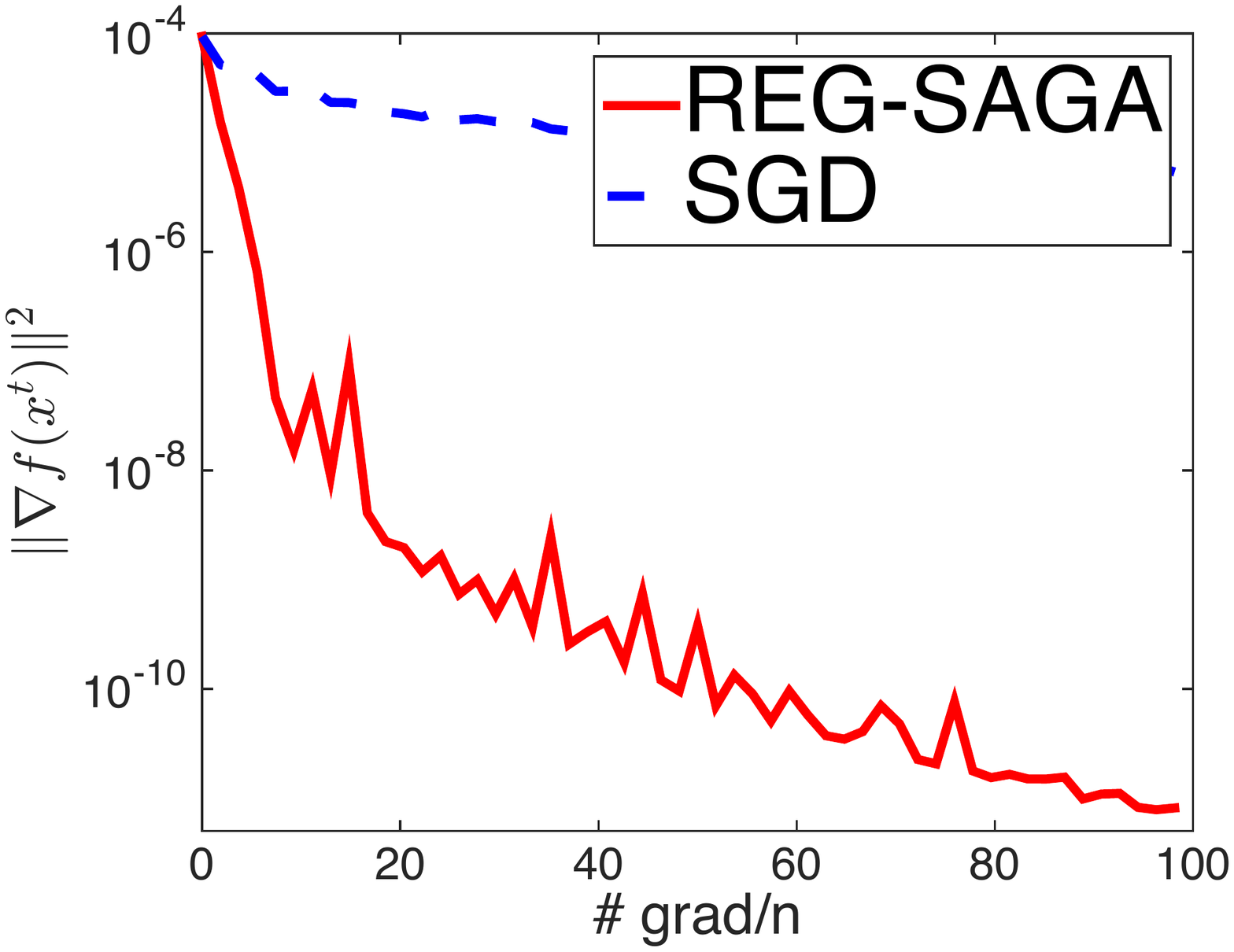}
   \end{minipage}
   \begin{minipage}[b]{.24\textwidth}
   \includegraphics[width=\textwidth]{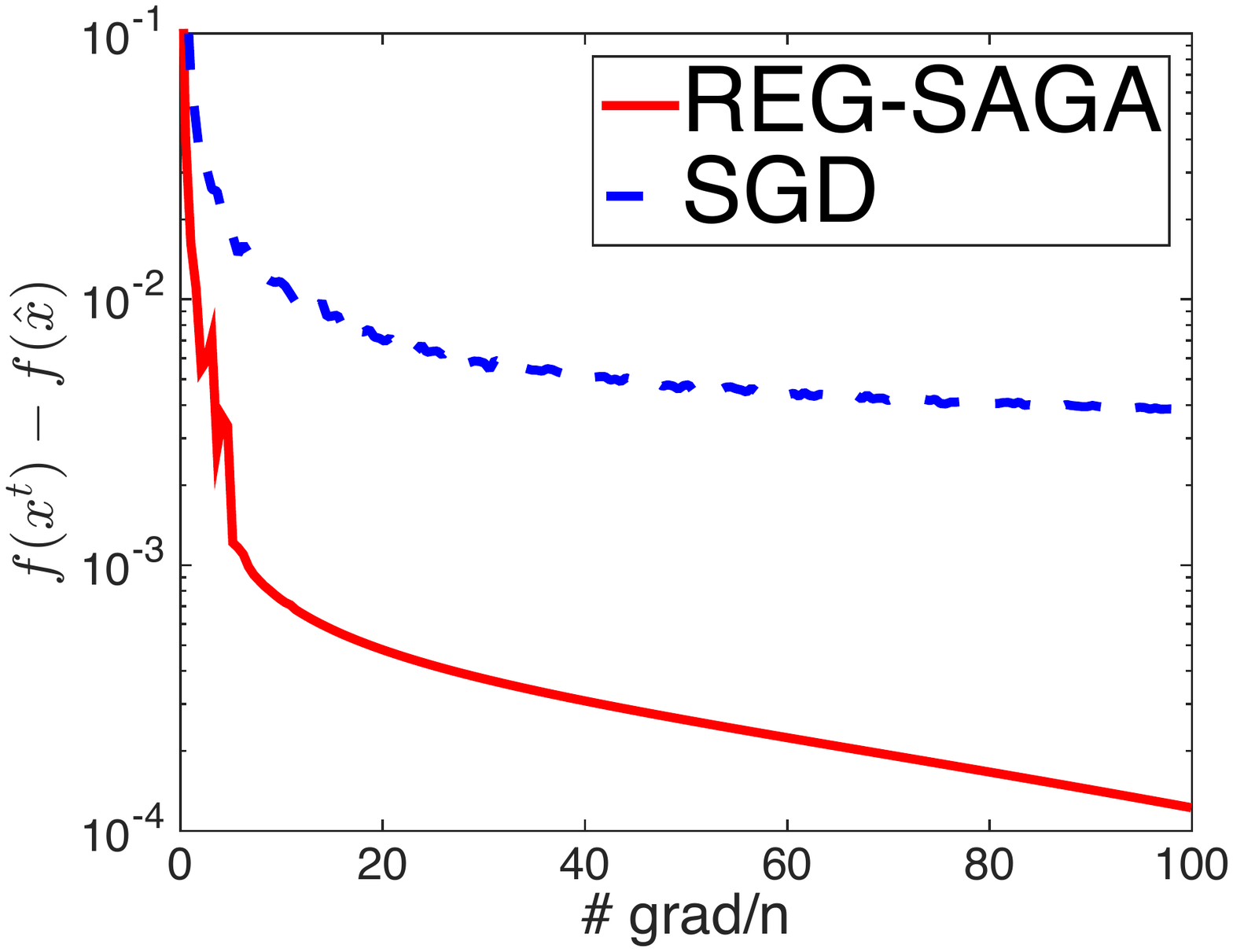}
   \end{minipage}
   \begin{minipage}[b]{.24\textwidth}
   \includegraphics[width=\textwidth]{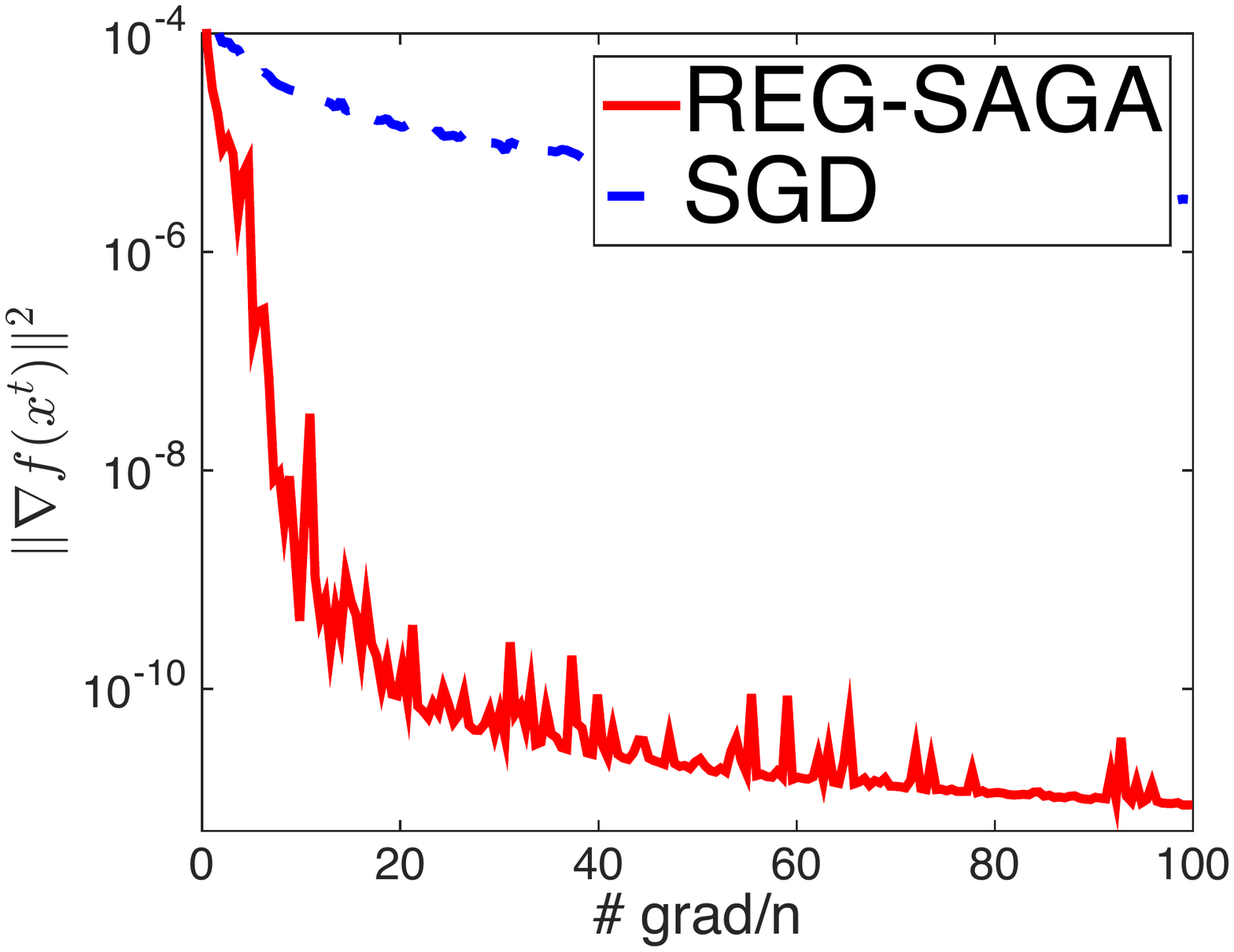}
   \end{minipage} %
	\caption{Results for nonconvex regularized generalized linear models (see Equation~\eqref{eq:gen-linear-model}). The first and last two figures correspond to rcv1 and realsim datasets respectively. The results compare the performance of $\regsaga$ and $\sgd$ algorithms. Here $\hat{x}$ corresponds to the solution obtained by running $\gd$ for a very long time and using multiple restarts. As seen in the figure, $\regsaga$ converges much faster than $\sgd$ in terms of objective function value and the stationarity gap $\|\nabla f(x)\|^2$.}
	\label{fig:results}
\end{figure*}

We compare the performance of $\sgd$ (the \emph{de facto} incremental method for nonconvex optimization) with nonconvex $\regsaga$ in our experiments. The comparison is based on the following criteria: (i) the objective function value (also called training loss in this context), which is the main goal of the paper; and (ii) the stationarity gap $\| \nabla f(x)\|^2$, the criteria used for our theoretical analysis. For the step size of $\sgd$, we use the popular $t-$inverse schedule $\eta_t = \eta_0(1 + \eta'\lfloor t/n \rfloor)^{-1}$, where $\eta_0$ and $\eta'$ are tuned so that \sgd gives the best performance on the training loss. In our experiments, we also use $\eta' = 0$; this results in a fixed step size for $\sgd$. For $\regsaga$, a fixed step size is chosen (as suggested by our analysis) so that it gives the best performance on the objective function value, i.e., training loss. Note that due to the structure of the problem in~\eqref{eq:gen-linear-model}, as discussed in Section~\ref{sec:reg}, the storage cost of $\regsaga$ is just $O(n)$. 

Initialization is critical to many of the incremental methods like $\regsaga$. This is due to the stronger dependence of the convergence on the initial point (see Theorem~\ref{thm:nonconvex-gen}). Furthermore, one has to obtain $\nabla f_i(\alpha^0_i)$ for all $i \in [n]$ in $\regsaga$ algorithm (see Algorithm~\ref{alg:saga}). For initialization of both $\sgd$ and $\regsaga$, we make one pass (without replacement) through the dataset and perform the updates of $\sgd$ during this pass. Doing so not only allows us to also obtain a good initial point $x^0$ but also to compute the initial values of $\nabla f(\alpha_i^0)$ for $i \in [n]$. Note that this choice results in a variant of $\regsaga$ where $\alpha_i^0$ are different for various $i$ (unlike the pseudocode in Algorithm~\ref{alg:saga}). The convergence rates of this variant can be shown to be similar to that of Algorithm~\ref{alg:saga}.

Figure~\ref{fig:results} shows the results of our experiments. The results are on two standard UCI datasets, `rcv1' and `realsim'\footnote{The datasets can be downloaded from \url{https://www.csie.ntu.edu.tw/~cjlin/libsvmtools/datasets/binary.html}.}. The plots compare the criteria mentioned earlier against the number of IFO calls made by the corresponding algorithm. For the objective function, we look at the difference between the objective function ($f(x^t)$) and the best objective function value obtained by running $\gd$ for a very large number of iterations using multiple initializations (denoted by $f(\hat{x})$). As shown in the figure, $\regsaga$ converges much faster than $\sgd$ in terms of objective value. Furthermore, as supported by the theory, the stationarity gap for $\regsaga$ is very small in comparison to $\sgd$. Also, in our experiments, the selection of step size was much easier for $\regsaga$ when compared to $\sgd$. Overall the empirical results for nonconvex regularized problems are promising. It will be interesting to apply the approach for other smooth nonconvex problems. 

\section{Conclusion}

In this paper, we investigated a fast incremental method ($\saga$) for nonconvex optimization. Our main contribution in this paper to show that $\saga$ can provably perform better than both  $\sgd$ and $\gd$ in the context of nonconvex optimization. We also showed that with additional assumptions on function $f$ in~\eqref{eq:1} like gradient dominance, $\saga$ has linear convergence to the \emph{global} minimum as opposed to sublinear convergence of $\sgd$. Furthermore, for large scale parallel settings, we proposed a minibatch variant of $\saga$ with stronger theoretical convergence rates than $\sgd$ by attaining linear speedups in the size of the minibatches. One of the biggest advantages of $\saga$ is the ability to use fixed step size. Such a property is important in practice since selection of step size (learning rate) for $\sgd$ is typically difficult and is one of its biggest drawbacks. 

\bibliographystyle{custom}
\bibliography{bibfile}

\section*{Appendix}

\section{Proof of Lemma~\ref{lem:nonconvex-saga}}
\label{sec:proofs}
\begin{proof}
Since $f$ is $L$-smooth, from Lemma~\ref{lem:descent-lemma}, we have
\begin{align*}
&\mathbb{E}[f(x^{t+1})] \leq \mathbb{E}[f(x^{t}) + \langle \nabla f(x^t), x^{t+1} - x^t \rangle \nonumber \\
& \qquad \qquad \qquad \qquad \qquad + \tfrac{L}{2} \| x^{t+1} - x^t \|^2].
\end{align*}
We first note that the update in Algorithm~\ref{alg:saga} is unbiased i.e., $\mathbb{E}[v^t] = \nabla f(x^t)$. By using this property of the update on the right hand side of the inequality above, we get the following:
\begin{align}
&\mathbb{E}[f(x^{t+1})] \leq  \mathbb{E}[f(x^{t}) - \eta_t \|\nabla f(x^{t})\|^2 + \tfrac{L\eta^2}{2} \|v^t \|^2].
\label{eq:saga-proof-eq1}
\end{align}
Here we used the fact that $x^{t+1} - x^{t} = -\eta v^t$ (see Algorithm~\ref{alg:saga}). Consider now the Lyapunov function
$$
R^{t} := \mathbb{E}[f(x^{t}) + \tfrac{c_{t}}{n} \sum_{i=1}^n \|x^{t} - \alpha_{i}^t\|^2].
$$
For bounding $R^{t+1}$ we need the following:
\begin{align}
\label{eq:aux-term}
& \frac{1}{n} \sum_{i=1}^n \mathbb{E}[\|x^{t+1} - \alpha_{i}^{t+1}\|^2] \nonumber \\
&= \frac{1}{n} \sum_{i=1}^n \left[ \frac{1}{n} \mathbb{E}\|x^{t+1} - x^{t}\|^2 + \frac{n-1}{n} \underbrace{\mathbb{E}\|x^{t+1} - \alpha_{i}^{t}\|^2}_{T_1} \right].
\end{align}
The above equality from the definition of $\alpha^{t+1}_i$ and the uniform randomness of index $j_t$ in Algorithm~\ref{alg:saga}. The term $T_1$ in~\eqref{eq:aux-term} can be bounded as follows
\begin{align}
&T_1 = \mathbb{E}[\|x^{t+1} - x^t + x^t - \alpha_{i}^{t}\|^2] \nonumber \\
&= \mathbb{E}[\|x^{t+1} - x^t\|^2 + \|x^t - \alpha_{i}^{t}\|^2 \nonumber + 2\langle x^{t+1} - x^t, x^t - \alpha_{i}^{t}\rangle] \nonumber \\
&= \mathbb{E}[\|x^{t+1} - x^t\|^2 + \|x^t - \alpha_{i}^{t}\|^2] - 2\eta \mathbb{E}[\langle \nabla f(x^t), x^t -\alpha_{i}^{t}\rangle] \nonumber \\
&\leq \mathbb{E}[\|x^{t+1} - x^t\|^2 + \|x^t - \alpha_{i}^{t}\|^2] \nonumber \\
& \qquad \qquad + 2 \eta \mathbb{E}\left[\tfrac{1}{2\beta} \|\nabla f(x^t)\|^2 + \tfrac{1}{2}\beta \| x^t - \alpha_{i}^{t}\|^2 \right].
\label{eq:saga-proof-eq2}
\end{align}
The second equality again follows from the unbiasedness of the update of $\saga$. The last inequality follows from a simple application of Cauchy-Schwarz and Young's inequality. Plugging~\eqref{eq:saga-proof-eq1} and~\eqref{eq:saga-proof-eq2} into $R^{t+1}$, we obtain the following bound:
\begin{align}
& R^{t+1} \leq \mathbb{E}[f(x^{t}) - \eta \|\nabla f(x^{t})\|^2 + \tfrac{L\eta^2}{2} \|v^t \|^2] \nonumber \\
&  + \mathbb{E}[c_{t+1}\|x^{t+1} - x^t\|^2 + c_{t+1}\frac{n-1}{n^2}  \sum_{i=1}^n \|x^t - \alpha_i^t\|^2] \nonumber \\
&  + \frac{2(n-1)c_{t+1}\eta}{n^2} \sum_{i=1}^n \mathbb{E}\left[\tfrac{1}{2\beta} \|\nabla f(x^t)\|^2 + \tfrac{1}{2}\beta \| x^t - \alpha_i^t \|^2 \right] \nonumber\\
&\leq \mathbb{E}[f(x^{t}) - \left(\eta - \tfrac{c_{t+1}\eta}{\beta}\right) \|\nabla f(x^{t})\|^2 \nonumber\\
&  \qquad + \left(\tfrac{L\eta^2}{2} + c_{t+1}\eta^2 \right)\mathbb{E}[\|v^t\|^2] \nonumber\\
&  \qquad + \left(\frac{n-1}{n} c_{t+1} + c_{t+1}\eta\beta \right) \frac{1}{n} \sum_{i=1}^n \mathbb{E}\left[\| x^t - \alpha_i^t\|^2 \right].
\label{eq:saga-proof-eq3}
\end{align}
To further bound the quantity in~\eqref{eq:saga-proof-eq3}, we use Lemma~\ref{lem:nonconvex-variance-lemma} to bound $\mathbb{E}[\|v^{t}\|^2]$, so that upon substituting it into ~\eqref{eq:saga-proof-eq3}, we obtain
\begin{align*}
& R^{t+1} \leq \mathbb{E}[f(x^{t})] \nonumber \\
& - \left(\eta - \tfrac{c_{t+1}\eta}{\beta} - \eta^2L - 2c_{t+1}\eta^2\right) \mathbb{E}[\|\nabla f(x^{t})\|^2] \nonumber\\
& + \left[c_{t+1}\bigl(1 - \tfrac{1}{n} + \eta\beta + 2\eta^2L^2\bigr)+\eta^2L^3\right]
 \tfrac{1}{n} \sum_{i=1}^n \mathbb{E}\left[\| x^t - \alpha_i^t \|^2 \right] \nonumber \\
& \leq R^{t} - \bigl(\eta - \tfrac{c_{t+1}\eta}{\beta} - \eta^2L - 2c_{t+1}\eta^2\bigr) \mathbb{E}[\|\nabla f(x^{t})\|^2].
\end{align*}
The second inequality follows from the definition of $c_{t}$ i.e., $c_{t} = c_{t+1}(1 - \tfrac{1}{n} + \eta\beta + 2\eta^2L^2) +  \eta^2L^3$ and $R^t$ specified in the statement, thus concluding the proof.
\end{proof}

\section{Other Lemmas}

The following lemma provides a bound on the variance of the update used in $\saga$ algorithm. More specifically, it bounds the quantity $\mathbb{E}[\|v^t\|^2]$. A more general result for bounding the variance of the minibatch scheme in Algorithm~\ref{alg:minibatch-saga} can be proved along similar lines.

\begin{lemma}
\label{lem:nonconvex-variance-lemma}
Let $v^t$ be computed by Algorithm~\ref{alg:saga}. Then,
\begin{align*}
\mathbb{E}[\|v^t\|^2] \leq 2\mathbb{E}[\|\nabla f(x^{t})\|^2] + \frac{2L^2}{n} \sum_{i=1}^n \mathbb{E}[\|x^{t} - \alpha_i^t\|^2].
\end{align*}
\end{lemma}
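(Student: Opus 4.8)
The statement to prove is Lemma~\ref{lem:nonconvex-variance-lemma}, the variance bound $\mathbb{E}[\|v^t\|^2] \leq 2\mathbb{E}[\|\nabla f(x^{t})\|^2] + \tfrac{2L^2}{n}\sum_i \mathbb{E}[\|x^t - \alpha_i^t\|^2]$ for the \saga update $v^t = \nabla f_{i_t}(x^t) - \nabla f_{i_t}(\alpha_{i_t}^t) + g^t$.

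\textbf{Plan.} The plan is to start from the elementary inequality $\|a+b\|^2 \le 2\|a\|^2 + 2\|b\|^2$, applied with the split $v^t = \nabla f(x^t) + \bigl(v^t - \nabla f(x^t)\bigr)$. This immediately gives
\begin{align*}
\mathbb{E}[\|v^t\|^2] \le 2\mathbb{E}[\|\nabla f(x^t)\|^2] + 2\mathbb{E}[\|v^t - \nabla f(x^t)\|^2],
\end{align*}
so it remains to bound the second term by $\tfrac{L^2}{n}\sum_i \mathbb{E}[\|x^t - \alpha_i^t\|^2]$. First I would write out $v^t - \nabla f(x^t) = \bigl(\nabla f_{i_t}(x^t) - \nabla f_{i_t}(\alpha_{i_t}^t)\bigr) - \bigl(\nabla f(x^t) - g^t\bigr)$ and recall that $g^t = \tfrac1n\sum_i \nabla f_i(\alpha_i^t)$, so that, taking the expectation over $i_t$ (uniform on $[n]$, independent of the current iterate and the $\alpha_i^t$), the conditional mean of $\nabla f_{i_t}(x^t) - \nabla f_{i_t}(\alpha_{i_t}^t)$ is exactly $\nabla f(x^t) - g^t$. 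Hence the quantity $\nabla f_{i_t}(x^t) - \nabla f_{i_t}(\alpha_{i_t}^t)$ minus its mean is precisely $v^t - \nabla f(x^t)$, i.e. we are looking at the variance of a single random draw.

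\textbf{Key steps.} The core step is the standard fact that for any random variable $Z$, $\mathbb{E}\|Z - \mathbb{E}Z\|^2 \le \mathbb{E}\|Z\|^2$; applying this conditionally with $Z = \nabla f_{i_t}(x^t) - \nabla f_{i_t}(\alpha_{i_t}^t)$ yields
\begin{align*}
\mathbb{E}[\|v^t - \nabla f(x^t)\|^2] \le \mathbb{E}\bigl[\|\nabla f_{i_t}(x^t) - \nabla f_{i_t}(\alpha_{i_t}^t)\|^2\bigr] = \tfrac1n\sum_{i=1}^n \mathbb{E}\bigl[\|\nabla f_i(x^t) - \nabla f_i(\alpha_i^t)\|^2\bigr],
\end{align*}
where the last equality is just averaging over the uniform choice of $i_t$. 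Then I would invoke $L$-smoothness of each $f_i$ — the assumption $\|\nabla f_i(x) - \nabla f_i(y)\| \le L\|x-y\|$ — to replace each summand by $L^2\mathbb{E}\|x^t - \alpha_i^t\|^2$. Multiplying through by the factor $2$ from the first display gives exactly the claimed bound. I would be slightly careful to phrase everything in terms of conditional expectation given the sigma-algebra generated by $x^t$ and $\{\alpha_i^t\}$, then take total expectation, so that the "$\mathbb{E}Z$" in the variance identity is legitimately $\nabla f(x^t) - g^t$ and the tower property closes the argument.

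\textbf{Main obstacle.} There is no real analytic obstacle here — the proof is a two-line sandwich of Young's inequality, the variance-dominated-by-second-moment identity, and Lipschitzness. The only point requiring mild care is bookkeeping the conditioning: one must ensure $i_t$ is drawn independently of $x^t$ and the table $\{\alpha_i^t\}$ (true by construction of Algorithm~\ref{alg:saga}) so that the conditional expectation of the sampled gradient difference is the full-batch quantity $\nabla f(x^t) - g^t$; everything else is routine.
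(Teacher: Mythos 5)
Your proposal is correct and follows essentially the same route as the paper's proof: decompose $v^t = \nabla f(x^t) + (\zeta^t - \mathbb{E}[\zeta^t])$ with $\zeta^t = \nabla f_{i_t}(x^t) - \nabla f_{i_t}(\alpha_{i_t}^t)$, apply $\|a+b\|^2 \le 2\|a\|^2 + 2\|b\|^2$, bound the centered term's second moment by $\mathbb{E}\|\zeta^t\|^2$, and finish with $L$-smoothness of each $f_i$. Your explicit attention to the conditioning on $x^t$ and $\{\alpha_i^t\}$ is a welcome clarification of a step the paper leaves implicit, but it is not a different argument.
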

\begin{proof}
For ease of exposition, we use the notation
$$
\zeta^{t} = \left(\nabla f_{i_t}(x^{t}) - \nabla f_{i_t}(\alpha_{i_t}^{t})\right)
$$
Using the convexity of $\|\!\cdot\!\|^2$ and the definition of $v^t$ we get
\begin{align*}
&\mathbb{E}[\|v^t\|^2] = \mathbb{E}[\|\zeta^{t} + \tfrac{1}{n}\sum_{i=1}^n\nabla f(\alpha_i^t) \|^2] \\
&= \mathbb{E}[\| \zeta^{t}  + \tfrac{1}{n}\sum_{i=1}^n\nabla f(\alpha_i^t) - \nabla f(x^{t}) +  \nabla f(x^{t}) \|^2]\\
&\leq 2\mathbb{E}[\|\nabla f(x^{t})\|^2] + 2 \mathbb{E}[\|\zeta^{t} - \mathbb{E}[\zeta^{t}]\|^2]\\
&\leq 2\mathbb{E}[\|\nabla f(x^{t})\|^2] + 2 \mathbb{E}[\|\zeta^{t}\|^2].
\end{align*}
The first inequality follows from the fact that $\|a + b\|^2 \leq 2(\|a\|^2 + \|b\|^2)$ and that $ \mathbb{E}[\zeta^{t}] = \nabla f(x^{t}) - \tfrac{1}{n} \sum_{i=1}^n \nabla f(\alpha_i^{t})$. The second inequality is obtained by noting that for a random variable $\zeta$, $\mathbb{E}[\|\zeta - \mathbb{E}[\zeta]\|^2] \leq \mathbb{E}[\|\zeta\|^2]$. Using Jensen's inequality in the inequality above, we get
\begin{align*}
&\mathbb{E}[\|v^t\|^2] \\
& \leq 2\mathbb{E}[\|\nabla f(x^{t})\|^2] + \frac{2}{n} \sum_{i=1}^n \mathbb{E}[\|\nabla f_{i_t}(x^{t}) - \nabla f_{i_t}(\alpha_i^{t})\|^2] \\
& \leq 2\mathbb{E}[\|\nabla f(x^{t})\|^2] + \frac{2L^2}{n} \sum_{i=1}^n \mathbb{E}[\|x^{t} - \alpha_i^{t}\|^2].
\end{align*}
The last inequality follows from $L$-smoothness of $f_{i_t}$, thus concluding the proof.
\end{proof}

The following result provides a bound on the function value of functions with Lipschitz continuous gradients.
\begin{lemma}
\label{lem:descent-lemma}
Suppose the function $f:\mathbb{R}^d \rightarrow \mathbb{R}$ is $L$-smooth, then the following holds
\begin{align*}
f(x)  \leq f(y) + \langle \nabla f(y), x - y \rangle + \frac{L}{2} \| x - y \|^2,
\end{align*}
for all $x, y \in \mathbb{R}^d$.
\end{lemma}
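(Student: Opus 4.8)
The plan is to prove this standard descent inequality by reducing the multivariate statement to a one-dimensional integral along the segment joining $y$ to $x$, and then invoking the $L$-smoothness assumption. First I would introduce the auxiliary function $\phi:[0,1]\to\reals$ defined by $\phi(s) = f(y + s(x-y))$. Since $f$ is $L$-smooth its gradient is (in particular) continuous, so $\phi$ is continuously differentiable with $\phi'(s) = \ip{\nabla f(y+s(x-y))}{x-y}$, and the fundamental theorem of calculus gives $f(x) - f(y) = \phi(1) - \phi(0) = \int_0^1 \ip{\nabla f(y+s(x-y))}{x-y}\,ds$.

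Next I would subtract the linear term. Writing $\ip{\nabla f(y)}{x-y} = \int_0^1 \ip{\nabla f(y)}{x-y}\,ds$ and combining with the identity above yields
\begin{align*}
f(x) - f(y) - \ip{\nabla f(y)}{x-y} = \int_0^1 \ip{\nabla f(y+s(x-y)) - \nabla f(y)}{\,x-y}\,ds.
\end{align*}
This rewriting is the crux of the argument: it isolates exactly the difference of gradients to which the Lipschitz hypothesis applies.

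The final step is a routine estimate. Applying the Cauchy--Schwarz inequality inside the integral and then the $L$-smoothness bound $\|\nabla f(y+s(x-y)) - \nabla f(y)\| \le L\,\|s(x-y)\| = sL\,\|x-y\|$, I obtain
\begin{align*}
f(x) - f(y) - \ip{\nabla f(y)}{x-y} \le \int_0^1 sL\,\|x-y\|^2\,ds = \tfrac{L}{2}\,\|x-y\|^2,
\end{align*}
where the last equality uses $\int_0^1 s\,ds = 1/2$. Rearranging gives the claimed inequality.

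There is no genuine obstacle here, since this is a classical consequence of the mean-value form of Taylor's theorem; the only point requiring a line of care is justifying the differentiation of $\phi$ and the use of the fundamental theorem of calculus, which is licensed by the continuity of $\nabla f$ implied by $L$-smoothness. Everything else is a direct application of Cauchy--Schwarz and the Lipschitz bound.
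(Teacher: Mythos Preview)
Your argument is correct and is exactly the classical proof of the descent lemma via the fundamental theorem of calculus along the segment from $y$ to $x$, followed by Cauchy--Schwarz and the Lipschitz bound on $\nabla f$. The paper itself does not supply a proof of this lemma---it is stated as a standard auxiliary fact---so there is nothing to compare against; your write-up would serve as a complete justification.
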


\end{document}